\newcommand{\LC}{\left(}
\newcommand{\RC}{\right)}
\theoremstyle{plain}
\newtheorem{thm}{Theorem}[section]
\newtheorem{prop}{Proposition}[section]
\newtheorem{lem}[prop]{Lemma}
\newtheorem{cor}[prop]{Corollary}
\newtheorem{rmk}[prop]{Remark}
\newtheorem{example}[prop]{Example}
\numberwithin{equation}{section}
\newcommand {\R} {\mathbb{R}} 
 \newcommand {\N} {\mathbb{N}}
\newcommand {\p} {\partial}
\newcommand{\vareps}{\varepsilon}
\newcommand{\eps}{\epsilon}
\newcommand{\norm}[1]{\lVert #1 \rVert}
\newcommand{\wt}{\widetilde}
\newcommand{\ccdot}{\,\cdot\,}
\newcommand{\s}{\hspace{0.5pt}}
\newcommand{\kommentar}[1]{}
\title[Uniqueness and gauge breaking for semilinear elliptic equations]{Uniqueness results and gauge breaking for inverse source problems of semilinear elliptic equations}
\author[T. Liimatainen]{Tony Liimatainen}
\address{Department of Mathematics and Statistics, University of Helsinki, Helsinki, Finland}
\curraddr{}
\email{tony.liimatainen@helsinki.fi}
\author[Y.-H. Lin]{Yi-Hsuan Lin}
\address{Department of Applied Mathematics, National Yang Ming Chiao Tung University, Hsinchu, Taiwan}
\curraddr{}
\email{yihsuanlin3@gmail.com}
\begin{document}
	\maketitle
	
	\begin{abstract}
	We study inverse source problems associated to semilinear elliptic equations of the form
	\[
	\Delta u(x)+a(x,u)=F(x),
	\]
	on a bounded domain $\Omega\subset \R^n$, $n\geq 2$.  We show that it is possible to use nonlinearity to break the gauge symmetry of the inverse source problem for a class of nonlinearities $a(x,u)$. This is in contrast to inverse source problems for linear equations, which always have a gauge symmetry. The class of  nonlinearities include certain polynomials and exponential nonlinearities. For these nonlinearities, we determine both $a(x,u)$ and $F(x)$ uniquely from the associated DN map.
	
    Moreover, for general nonlinearities $a(x,u)$, we show that we can recover the derivatives $\p_u^ka(x,u)$  and the source $F(x)$  up to a gauge. Especially, we recover general polynomial nonlinearities up to a gauge and generalize results of \cite{FO19,LLLS2019partial} by removing the assumption that $u\equiv 0$ is a solution.

		\medskip
		
		\noindent{\bf Keywords.} Inverse problems, inverse source problems, gauge invariance, semilinear elliptic equations, sine-Gordon equation, higher order linearization.
		
	 	\noindent{\bf Mathematics Subject Classification (2010)}: 35R30, 35J25, 35J61
		
	\end{abstract}

    \tableofcontents

	\section{Introduction}
	
	Let $\Omega \subset\R^n$ be a bounded domain with $C^\infty$-smooth boundary $\p \Omega$ with $n\geq 2$. In this paper we consider semilinear elliptic equations of the form
	\begin{align}\label{main equation}
		\begin{cases}
			\Delta u +a(x,u)=F &\text{ in }\Omega, \\
		u=f &\text{ on }\p \Omega,
		\end{cases}
	\end{align}
	where $a=a(x,z):\overline{\Omega}\times \R\to \R$ is $C^\infty$-smooth in  the $z$-variable. For presentational purposes we also assume
	 \begin{align}\label{a(x,0)=0}
		a(x,0)=0 \text{ in }\Omega.
	\end{align}  
	 This condition is not a restriction of generality as it can be achieved by redefining the source $F$ in \eqref{main equation}.  
    
Let us assume for now that the boundary value problem \eqref{main equation} is well-posed on an open subset $\mathcal{N}\subset C^{2,\alpha}(\p \Omega)$.
In this case, the \emph{Dirichlet-to-Neumann map} (DN map) is defined by the usual assignment
   \begin{equation}\label{eq:DNmap}
   \Lambda_{a,F}: \mathcal{N}\to C^{1,\alpha}(\p \Omega), \qquad f\mapsto \left. \p_\nu u_f\right|_{\p \Omega}.
   \end{equation}
     Here $\nu$ denotes the unit outer normal on $\p \Omega$. In Theorem \ref{Thm:wellposedness_and_expansion} we show that if there is 
     \[
f_0\in  C^{2,\alpha}(\p \Omega),
\]
such that the equation \eqref{main equation} admits a solution $u_{0}\in C^{2,\alpha}(\Omega)$ with $u_0|_{\p \Omega}=f_0$, and 
\begin{align*}
			0 \text{ is not a Dirichlet eigenvalue of }\Delta + \p _z a (x,u_0) \text{ in }\Omega,
		\end{align*}
then there is an open neighborhood $\mathcal{N}\subset C^{2,\alpha}(\p \Omega)$ of $f_0$ where \eqref{main equation} is well-posed in the following sense: For each $f\in \mathcal{N}$ there exists a solution $u_f$ to \eqref{main equation} with $u_f|_{\p \Omega}=f$ and the solution $u_f$ is unique in a fixed neighborhood of $u_0\in  C^{2,\alpha}(\Omega)$. If there holds the sign condition
\begin{align*}
 \p_za(x,z)\leq 0, 
\end{align*}
for $x\in \Omega$ and  $z\in \R$, the assumptions of Theorem \ref{Thm:wellposedness_and_expansion} will be satisfied and the DN map is well-defined by \cite{gilbarg2015elliptic}. If $F$ vanishes on $\Omega$, one can take $f_0\equiv 0$ on $\p \Omega$. In this case, Theorem \ref{Thm:wellposedness_and_expansion} reduces to similar well-posedness theorems in the literature, such as the one in \cite{LLLS2019partial}.

Consider the equation \eqref{main equation} for two sets $(a_1,F_1)$ and $(a_2, F_2)$ of coefficients. Let $\Lambda_1$ and $\Lambda_2$ be the corresponding DN maps defined on $\mathcal{N}_1\subset C^{2,\alpha}(\p \Omega)$ and $\mathcal{N}_2\subset C^{2,\alpha}(\p \Omega)$, respectively. When we write
\[
 \Lambda_1(f)=\Lambda_2(f), \text{ for any } f\in \mathcal{N}
\]
we have especially assumed that $\mathcal{N}\subset \mathcal{N}_1\cap \mathcal{N}_2$. 
   
   \smallskip
   
 \begin{itemize}
 	\item   \textbf{Inverse source problem:} What can we determine about both $a$ and $F$ from the knowledge of the corresponding DN map $\Lambda_{a,F}$?
 \end{itemize}
	
	\smallskip	

For general nonlinearities $a(x,z)$ it is impossible to determine both $a(x,z)$ and $F(x)$ simultaneously from the corresponding DN map. This is due to an inherit gauge invariance of the problem, which we will explain later. For inverse source problems of related linear equations, where the aim is to determine a source function from boundary measurements, the gauge invariance of the problem is well-known: 
	\begin{rmk}\label{rmk:counterexample}
	Let us consider the inverse source problem for the linear equation
	\begin{align}\label{l1}
		\begin{cases}
			\Delta u  +qu=F &\text{ in }\Omega,\\
			u=f& \text{ on }\p \Omega.
		\end{cases}
	\end{align}
	In this inverse problem one asks if the DN map $\Lambda_F: C^{\infty}(\p \Omega)\to C^{\infty}(\p \Omega)$ associated to the above equation determines $F$ uniquely. We assume here for simplicity that the potential function $q$ is known.  In general, the answer to the question is negative due to the following observation. Let $u$ solve \eqref{l1} and let $\psi$ be an arbitrary $C^2$-function satisfying $\left. \psi \right|_{\p \Omega} =\left. \p_\nu \psi \right|_{\p \Omega} =0$. Let us also define
     \begin{align}\label{eq_r1}
     	\tilde u:=u+\psi. 
     \end{align}
	Consequently, we have $\LC \tilde u|_{\p \Omega}, \p_\nu \tilde u|_{\p \Omega}\RC=\LC u|_{\p \Omega}, \p_\nu u|_{\p \Omega}\RC$, and 
	\begin{align}\label{eq_r2}
		\begin{split}
			\Delta \tilde u+ q \tilde u =&\Delta (u+\psi) + q(u+\psi)\\
			=&F -qu +\Delta \psi +qu+q\psi\\
			=&F+\Delta \psi +q\psi.
		\end{split}
	\end{align}
	Hence $u$ and $\tilde u$ solve the equations $\Delta u +qu=F$ and $\Delta \tilde u +q\tilde u=F+\Delta \psi+q\psi$ respectively. Since $u$ and $\tilde u$ also have the same Cauchy data on $\p \Omega$, it follows that the corresponding DN maps are the same: $\Lambda_{F}(f)=\Lambda_{F+\Delta \psi + q\psi}(f)$ on $\p \Omega$. It is thus not possible to determine a source function uniquely from the DN map.
\end{rmk}

In this work, we consider different types of nonlinearities, including general ones. For general nonlinearities $a(x,z)$, we prove in Theorem \ref{Thm: general nonlinearity} that the corresponding DN map determines the quantities
\begin{align}\label{same Taylor coef_intro}
		\p_z^k a(x,u_0(x)), \quad x\in \Omega, \quad k\in \N.
\end{align}
Here $u_0$ is a solution to \eqref{main equation} corresponding to a boundary value $f_0$. As already evidenced by Remark \ref{rmk:counterexample}, it might not be possible to recover $u_0$ from the DN map. This means that in general the condition \eqref{same Taylor coef_intro} does not determine $a(x,z)$, or even its derivatives in the variable $z$. 

Due to the above obstruction to determining $a(x,z)$, and consequently $F(x)$, in general, we mainly focus on nonlinearities $a(x,z)$ of the following special types:
\begin{itemize}
	\item \textbf{General polynomial nonlinearity:} 
	\begin{align}\label{poly in intro}
		a(x,z)=\displaystyle\sum_{k=1}^N a^{(k)}(x)z^k, \quad N\in \N,
	\end{align}
	\item \textbf{Exponential type nonlinearities:} 
	\begin{align}\label{expo in intro}
		a(x,z)=q(x)e^z \text{ and } a(x,z)=q(x)\s ze^z,
	\end{align} 
	\item \textbf{Sine-Gordon nonlinearity:} 
	\begin{align}\label{sine-Gordon in intro}
		a(x,z)=q(x)\sin(z).
	\end{align}
\end{itemize}
For these nonlinearities, we show that the corresponding inverse source problems are either uniquely solvable or there is a gauge symmetry, which has an explicit form. The fact that there are nonlinearities for which the related inverse source problem is uniquely solvable is in contrast to inverse source problem for linear equations, which always have the gauge symmetry presented in Remark \ref{rmk:counterexample}. That is, nonlinearity can make inverse source problems uniquely solvable.

Quadratic nonlinearity 
\[
 a(x,u)=a^{(1)}(x)u(x) + a^{(2)}(x)u^2(x)
\]
has a specific form gauge symmetry, which we now derive. 
For this, let us assume that $u$ solves \eqref{main equation}, where $a(x,z)$ is as above. Let $\psi\in C^{2}(\overline{\Omega})$. We denote by $\tilde a^{(1)}$, $\tilde a^{(2)}$ and $\tilde F$ another set of $C^\infty(\overline{\Omega})$ functions, which can depend on $\psi$. Let $\Lambda$ and $\tilde\Lambda$ be the DN maps corresponding to the coefficients without and with tilde signs respectively.  If we define 
\[
\tilde u:=u+\psi,
\]
then we have the chain of equivalences
\begin{align*}
	&\Delta  \tilde u + \tilde a^{(1)}\tilde u+  \tilde a^{(2)} \tilde u^2 = \tilde F, \\
	\iff & \Delta \LC u_1 +\psi \RC  + \tilde a^{(1)} \LC u+\psi \RC + \tilde a^{(2)} \LC u +\psi \RC^2  = \tilde F, \\
	\iff & \Delta u +\Delta \psi +\tilde a^{(1)} u + \tilde a^{(1)}\psi + \tilde a^{(2)} u^2 +2 \tilde a^{(2)}\psi u + \tilde a^{(2)}\psi^2 = \tilde F. 
\end{align*}
By using $\Delta u =-a^{(1)}u-a^{(2)}u^2 +F$ and equating the powers of $u$ gives the following system 
\begin{align}\label{gauge_intro}
	\begin{cases}
		F+\Delta \psi+ a^{(1)} \psi+a^{(2)} \psi^2= \tilde F & \text{ in }\Omega, \\
		a^{(1)}= \tilde a^{(1)}  + 2 \tilde a^{(2)} \psi & \text{ in }\Omega,\\
		a^{(2)}=\tilde a^{(2)}& \text{ in }\Omega.
	\end{cases}
\end{align}
If the above system is satisfied, then 
\[
 \Delta u + a(x,u)=F \iff \Delta \tilde u + a(x,\tilde u)=\tilde F.
\]
Consequently, if we additionally require that $\psi|_{\Omega}=\p_\nu \psi|_{\p \Omega}=0$, then the DN maps $\Lambda$ and $\tilde \Lambda$ are the same. That is, if we change the coefficients $\big( a^{(1)},a^{(2)},F \big)$ to $\big(  \tilde a^{(1)}, \tilde a^{(2)}, \tilde F\big)$, the DN map is preserved. Thus, it is at best possible to determine coefficients and a source from the DN map up to the gauge conditions \eqref{gauge_intro}. 


\smallskip

\noindent $\bullet$ \textbf{Earlier works.}
Before going into our results in detail, we discuss earlier related works. 
 The standard approach in the study of inverse problems for nonlinear elliptic equations was initiated in \cite{isakov1993uniqueness_parabolic}. There the author linearized the nonlinear DN map $C^{\infty}(\p \Omega)\to C^{\infty}(\p \Omega)$. The linearization reduced the inverse problem of a nonlinear equation to an inverse problem of a linear equation, which the author was able to solve by using methods for linear equations. 
Later, second order linearizations, where data depends on two independent parameters, were used to solve inverse problems for example in  \cite{AYT2017direct,CNV2019reconstruction,KN002,sun1996quasilinear,sun2010inverse,sun1997inverse}. 

For the case $F=0$ in $\Omega$ in \eqref{main equation}, equivalent to $u\equiv 0$ being a solution, inverse problems for semilinear elliptic equations were recently considered in \cite{FO19,LLLS2019partial}. The novelty of these works is that instead viewing nonlinearity as an additional complication in the inverse problem, the works used nonlinearity as a beneficial tool. The method of these two works originates from the seminal work \cite{KLU2018}, where inverse problems for nonlinear equations were studied in Lorentzian spacetimes. By using the method where nonlinearity is used as a tool, inverse problems for nonlinear equations have been solved in cases where the corresponding inverse problems for linear equations are still open. The method is by now usually called \emph{the higher order linearization method}.

After the works \cite{KLU2018, FO19,LLLS2019partial}, the literature about inverse problems for nonlinear equations based on the higher order linearization method, has grown substantially.
 The works \cite{LLLS2019partial,LLLS2019nonlinear,LLST2022inverse,KU2019remark,KU2019partial,FLL2021inverse,harrach2022simultaneous} investigated inverse problems for semilinear elliptic equations with general nonlinearities and in the case of partial data. Inverse problems for quasilinear elliptic equations using higher order linearization have been studied in \cite{KKU2022partial,CFKKU2021calderon,FKU2021inverse}. The works \cite{CLLO2022inverse,nurminen2022inverse} studied inverse problems for minimal surface equations on Riemannian surfaces and Euclidean domains. We also mention the works \cite{LL2020inverse,lin2020monotonicity,LL2022inverse,lai2019global,LO2022inverse,LZ2021inverse}, where inverse problems for semilinear fractional type equations have been studied.
 
 Inverse source problems for linear equations that regard determination of both unknown sources and coefficients have attracted recent interest. Applications of them include the photo/thermo-acoustic tomography \cite{A1}, magnetic anomaly detection \cite{A3,A4} and quantum mechanics \cite{A5,A6}. In this paper, we are interested in related nonlinear counterparts of the above works considering linear models. Finally, inverse problems of simultaneously recovering for both nonlinearities and initial data have been considered by \cite{LLLZ2021simultaneous} and \cite{LLL2021determining}.

\smallskip

In our first result we show that a quadratic nonlinearity and a source are determined by the corresponding DN map up to the gauge conditions \eqref{gauge_intro}.

\begin{thm}\label{Thm: gauge with quadratic}
	Let $\Omega \subset\R^n$ be a bounded domain with $C^\infty$-smooth boundary $\p \Omega$, $n\geq 2$. For $j=1,2$, let  
	\[
	a_j(x,z)=a_j^{(1)}(x)z+a_j^{(2)}(x)z^2,
	\]
	where $a_j^{(1)},a_j^{(2)}\in C^{\alpha}(\overline{\Omega})$ for some $0<\alpha <1$.
	 Consider the following semilinear elliptic equation 
\begin{align}\label{equation in thm}
	\begin{cases}
		\Delta u_j+ a_j(x,u_j) = F_j & \text{ in }\Omega,\\
		u_j=f &\text{ on }\p \Omega,
	\end{cases}
\end{align}
and let $\Lambda_{a_j,F_j}$ to be the corresponding DN map of \eqref{equation in thm}  for $j=1,2$. Suppose that there is an open set $\mathcal{N}\subset C^{2,\alpha}(\p\Omega)$ such that
\[
\Lambda_{a_1, F_1}(f)=\Lambda_{a_2, F_2}(f) \text{ for any } f\in \mathcal{N}. 
\]
Then there exists $\psi \in C^{2,\alpha}(\overline{\Omega})$ with $\psi|_{\p \Omega}=\left. \p_\nu \psi \right|_{\p \Omega}=0$ in $\Omega$ such that  
\begin{align}\label{gauge 1}
	\begin{cases}
		a_1^{(2)} =a_2^{(2)}=:a^{(2)}, &\\ 
		a_1^{(1)}=a_2^{(1)}+2a^{(2)} \psi, &\\ 
		F_1=F_2-\Delta\psi -a_1^{(2)}\psi -a^{(2)}\psi^2.& \\
	\end{cases}
\end{align}
\end{thm}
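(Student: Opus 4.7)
The plan is to identify the gauge function $\psi$ from a zeroth-order Cauchy-data argument and then apply the higher order linearization method to reduce the identification of the nonlinearity to a linear Calder\'on problem together with one integral identity at the second linearization. Fix $f_0\in\mathcal{N}$ and let $u_{j,0}\in C^{2,\alpha}(\overline{\Omega})$ be the solutions of \eqref{equation in thm} with $u_{j,0}|_{\p\Omega}=f_0$ provided by Theorem \ref{Thm:wellposedness_and_expansion}. Define $\psi:=u_{2,0}-u_{1,0}$. The common Dirichlet value gives $\psi|_{\p\Omega}=0$ and the identity $\Lambda_{a_1,F_1}(f_0)=\Lambda_{a_2,F_2}(f_0)$ gives $\p_\nu\psi|_{\p\Omega}=0$; this $\psi$ will be the function appearing in \eqref{gauge 1}.

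For the first linearization, I feed in data $f_0+\sum_{k=1}^K\vareps_k h_k\in\mathcal{N}$ for $|\vareps|$ small and arbitrary $h_k\in C^{2,\alpha}(\p\Omega)$. The smooth dependence of solutions on $\vareps$ coming from Theorem \ref{Thm:wellposedness_and_expansion} makes $v_j^{(k)}:=\p_{\vareps_k}u_j|_{\vareps=0}$ a classical solution of
\[
(\Delta+q_j)v_j^{(k)}=0\text{ in }\Omega,\qquad v_j^{(k)}|_{\p\Omega}=h_k,\qquad q_j:=a_j^{(1)}+2a_j^{(2)}u_{j,0}.
\]
Since $\Lambda_{a_1,F_1}=\Lambda_{a_2,F_2}$ on the open set $\mathcal{N}$, the linear DN maps of $\Delta+q_1$ and $\Delta+q_2$ coincide. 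The Calder\'on problem for Schr\"odinger operators (Sylvester--Uhlmann for $n\geq 3$, Bukhgeim for $n=2$) then yields $q_1=q_2=:q$, and unique solvability of the Dirichlet problem for $\Delta+q$ gives $v_1^{(k)}=v_2^{(k)}=:v^{(k)}$.

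The second linearization $w_j^{(k,l)}:=\p_{\vareps_k}\p_{\vareps_l}u_j|_{\vareps=0}$ uses $\p_z^2 a_j(x,z)=2a_j^{(2)}(x)$ and satisfies
\[
(\Delta+q)w_j^{(k,l)}=-2a_j^{(2)}v^{(k)}v^{(l)}\text{ in }\Omega,\qquad w_j^{(k,l)}|_{\p\Omega}=0.
\]
DN-map equality also gives $\p_\nu w_1^{(k,l)}=\p_\nu w_2^{(k,l)}$ on $\p\Omega$, so $w_1^{(k,l)}-w_2^{(k,l)}$ has vanishing Cauchy data. Testing the equation for this difference against a third linearized solution $v^{(m)}$ of $(\Delta+q)v^{(m)}=0$ and applying Green's identity makes both boundary contributions vanish, leaving
\[
\int_\Omega\bigl(a_1^{(2)}-a_2^{(2)}\bigr)v^{(k)}v^{(l)}v^{(m)}\,dx=0
\]
for all admissible boundary triples $(h_k,h_l,h_m)$. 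The density of triple products of solutions of $(\Delta+q)v=0$ in $L^1(\Omega)$, standard in the higher order linearization literature in dimensions $n\geq 2$, forces $a_1^{(2)}=a_2^{(2)}=:a^{(2)}$.

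Finally, substituting $a_1^{(2)}=a_2^{(2)}=a^{(2)}$ into the identity $q_1=q_2$ gives $a_1^{(1)}-a_2^{(1)}=2a^{(2)}(u_{2,0}-u_{1,0})=2a^{(2)}\psi$, the second gauge identity. Subtracting the background equations \eqref{equation in thm} for $j=1,2$, writing $u_{2,0}=u_{1,0}+\psi$, and simplifying with the coefficient relations already established then yields the stated formula for $F_1-F_2$. The main obstacle I expect is the density-of-products step in the second-linearization identity; the zeroth-order extraction of $\psi$ and the smooth $\vareps$-differentiation of solutions near the nonzero background $u_{j,0}$ follow directly from Theorem \ref{Thm:wellposedness_and_expansion}.
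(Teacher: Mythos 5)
Your proposal is correct and follows essentially the same route as the paper: first linearization plus the linear Calder\'on result to get $a_1^{(1)}+2a_1^{(2)}u_{1,0}=a_2^{(1)}+2a_2^{(2)}u_{2,0}$ and equality of the first-order solutions, then the second-linearization integral identity $\int_\Omega\bigl(a_1^{(2)}-a_2^{(2)}\bigr)v^{(k)}v^{(l)}v^{(m)}\,dx=0$, and finally $\psi:=u_{2,0}-u_{1,0}$ to read off the gauge relations. The only point to tighten is the last density step: ``density of triple products of solutions in $L^1$'' is not itself the classical theorem, and should be reduced to the pair-product density by first fixing $v^{(m)}$ to conclude $\bigl(a_1^{(2)}-a_2^{(2)}\bigr)v^{(m)}=0$ pointwise, and then either applying pair-density once more or choosing, via Runge approximation, a solution $v^{(m)}$ nonvanishing at any given point --- exactly the two-stage argument the paper uses.
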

We remark that in the theorem above it is sufficient that the domain $\mathcal{N}$ of the DN maps is any non-empty open subset of $C^{2,\alpha}(\p\Omega)$. Especially $\mathcal{N}$ can by very small in size. The same holds for other results of this paper.

 Interestingly, if one a priori knows the linear term, the gauge symmetry of the inverse source problem can be broken. This results in unique determination:
\begin{cor}[Gauge breaking quadratic]\label{Cor: Unique determination}
	Assume as in Theorem \ref{Thm: gauge with quadratic} and adopt its notation. Assume additionally that 
\[
 a_1^{(1)}=a_2^{(1)} \text{ in } \Omega
\]
and 
\[
 a_1^{(2)}(x)\neq  0 \text{ or } a_2^{(2)}(x)\neq  0 \text{ at any } x\in \Omega.
\]
Then also
\[
 F_1=F_2 \text{ and } a_1^{(2)}=a_2^{(2)}   \text{ in } \Omega.
\]
\end{cor}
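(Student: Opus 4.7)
The plan is to deduce this corollary as a direct consequence of Theorem \ref{Thm: gauge with quadratic}: the extra hypotheses force the gauge function $\psi$ produced by that theorem to vanish identically, and then the three gauge relations collapse into the desired equalities.

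First, I would invoke Theorem \ref{Thm: gauge with quadratic} to obtain a function $\psi\in C^{2,\alpha}(\overline{\Omega})$ with $\psi|_{\p \Omega}=\p_\nu \psi|_{\p \Omega}=0$ such that the relations \eqref{gauge 1} hold. The first relation already gives $a_1^{(2)}=a_2^{(2)}=:a^{(2)}$, which is one of the two conclusions. Moreover, combining the assumption $a_1^{(2)}\neq 0$ or $a_2^{(2)}\neq 0$ at every $x\in\Omega$ with the first gauge relation shows that $a^{(2)}(x)\neq 0$ pointwise in $\Omega$.

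Next, I would feed the extra hypothesis $a_1^{(1)}=a_2^{(1)}$ into the second relation in \eqref{gauge 1}, which yields
\[
 2 a^{(2)}(x) \psi(x)=0,\quad x\in \Omega.
\]
Since $a^{(2)}$ is nowhere zero by the previous step, this forces $\psi\equiv 0$ in $\Omega$. Substituting $\psi\equiv 0$ into the third gauge relation in \eqref{gauge 1} gives $F_1=F_2$, completing the proof.

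The argument contains no real obstacle; the conceptual content is that the quadratic gauge \eqref{gauge 1} uses the coefficient $a^{(2)}$ as a multiplier linking $\psi$ to the shift in $a^{(1)}$, so as soon as $a^{(2)}$ does not vanish on $\Omega$, knowledge of the linear term pins $\psi$ down to zero and the whole gauge symmetry collapses. The non-vanishing assumption on $a^{(2)}$ is therefore the precise condition that \emph{breaks} the gauge.
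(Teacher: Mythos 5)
Your proof is correct and follows exactly the paper's own argument: apply Theorem \ref{Thm: gauge with quadratic}, use $a_1^{(1)}=a_2^{(1)}$ together with the nowhere-vanishing of $a^{(2)}$ to force $\psi\equiv 0$, and then read off $F_1=F_2$ from the third gauge relation. No differences worth noting.
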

More precisely, the above corollary in particularly says the following. The inverse source problem of recovering $F$ from the DN map of
\[
 \Delta u  +qu+ u^2=F, 
\]
where $q$ is assumed to be known is uniquely solvable.  This is in contrast to the inverse source problem of $\Delta u  +qu=F$, which always has a gauge symmetry by Remark \ref{rmk:counterexample} (even if $q$ is known).  Thus, we have given an example where nonlinearity can be used to break the gauge symmetry in an inverse source problem.

For cubic nonlinearities we prove: 
 \begin{thm}\label{Thm: gauge with cubic}
 	Let $\Omega \subset\R^n$ be a bounded domain with $C^\infty$-smooth boundary $\p \Omega$,  $n\geq 2$. For $j=1,2$, let also 
 	$$
 	a_j(x,z)=a_j^{(1)}(x)z+a_j^{(2)}(x)z^2+a_j^{(3)}z^3,
 	$$
 	where $a_j^{(1)},a_j^{(2)},a_j^{(3)}\in C^{\alpha}(\overline{\Omega})$ for some $0<\alpha <1$. Let $\Lambda_{a_j,F_j}$ be the DN map of the equation
 	\begin{align}\label{equation in thm cubic}
 		\begin{cases}
 			\Delta u_j+ a_j(x,u_j) = F_j & \text{ in }\Omega,\\
 			u_j=f &\text{ on }\p \Omega.
 		\end{cases}
 	\end{align}
 	Suppose that there is an open set $\mathcal{N}\subset C^{2,\alpha}(\p\Omega)$ such that 
\[
\Lambda_{a_1, F_1}(f)=\Lambda_{a_2, F_2}(f) \text{ for any } f\in \mathcal{N}. 
\]
Then there exists $\psi \in C^{2,\alpha}(\overline{\Omega})$ with $\psi|_{\p \Omega}=\left. \p_\nu \psi \right|_{\p \Omega}=0$ in $\Omega$ such that

 	\begin{align}\label{gauge cubic thm}
 		\begin{cases}
 			a_1^{(3)}=a_2^{(3)}=:a^{(3)} &\text{ in }\Omega,\\
 			a_1^{(2)}=a_2^{(2)} +3a^{(3)}\psi&\text{ in }\Omega, \\
 			a_1^{(1)}= a_2^{(1)}  + 2 a_2^{(2)} \psi +3a^{(3)}\psi^2&\text{ in }\Omega,\\
 			F_1= F_2-\Delta \psi- a_2^{(1)} \psi-a_2^{(2)} \psi^2-a^{(3)}\psi^3  &\text{ in }\Omega.
 		\end{cases}
 	\end{align}
 \end{thm}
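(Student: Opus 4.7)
The plan is to mirror the proof strategy of Theorem~\ref{Thm: gauge with quadratic} but linearize one order higher. First, apply Theorem~\ref{Thm:wellposedness_and_expansion} to obtain, for a fixed $f_0 \in \mathcal{N}$, unique solutions $u_{j,0} \in C^{2,\alpha}(\overline{\Omega})$ of $\Delta u_{j,0} + a_j(x, u_{j,0}) = F_j$ with $u_{j,0}|_{\partial \Omega} = f_0$, for $j = 1,2$. Since $\Lambda_{a_1, F_1}(f_0) = \Lambda_{a_2, F_2}(f_0)$, the two background solutions share Cauchy data: $\partial_\nu u_{1,0}|_{\partial \Omega} = \partial_\nu u_{2,0}|_{\partial \Omega}$. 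Define
\begin{equation*}
\psi := u_{2,0} - u_{1,0} \in C^{2,\alpha}(\overline{\Omega}),
\end{equation*}
so that $\psi|_{\partial \Omega} = \partial_\nu \psi|_{\partial \Omega} = 0$. This $\psi$ will be the one appearing in \eqref{gauge cubic thm}.

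Next, perform higher-order linearizations around $f_0$. Given small parameters $\varepsilon = (\varepsilon_1, \varepsilon_2, \varepsilon_3)$ and $g_1, g_2, g_3 \in C^{2,\alpha}(\partial \Omega)$, let $u_{j, \varepsilon}$ solve the equation with boundary value $f_0 + \sum_\ell \varepsilon_\ell g_\ell$. The first derivatives $v_j^{(\ell)} := \partial_{\varepsilon_\ell} u_{j, \varepsilon}|_{\varepsilon = 0}$ solve
\begin{equation*}
\Delta v_j^{(\ell)} + \partial_z a_j(x, u_{j,0}) v_j^{(\ell)} = 0 \text{ in } \Omega, \quad v_j^{(\ell)}|_{\partial \Omega} = g_\ell,
\end{equation*}
while the mixed second and third derivatives of $u_{j, \varepsilon}$ at $\varepsilon = 0$ satisfy linear equations whose source terms involve products of the $v_j^{(\ell)}$ together with $\partial_z^2 a_j(x, u_{j,0})$ and $\partial_z^3 a_j(x, u_{j,0})$, respectively. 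Equating successively the first, second, and third mixed $\varepsilon$-derivatives of the two DN maps at $\varepsilon = 0$ and invoking the density of products of linearized solutions (as in the proof of Theorem~\ref{Thm: general nonlinearity}) yields the pointwise identities
\begin{equation*}
\partial_z^k a_1(x, u_{1,0}(x)) = \partial_z^k a_2(x, u_{2,0}(x)) \text{ in } \Omega, \quad k = 1, 2, 3.
\end{equation*}

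The remainder is an algebraic extraction of the gauge. For $a_j(x, z) = a_j^{(1)} z + a_j^{(2)} z^2 + a_j^{(3)} z^3$, the case $k = 3$ gives $6 a_1^{(3)} = 6 a_2^{(3)}$, hence $a_1^{(3)} = a_2^{(3)} =: a^{(3)}$. The case $k = 2$ reads $2 a_1^{(2)} + 6 a^{(3)} u_{1,0} = 2 a_2^{(2)} + 6 a^{(3)} u_{2,0}$, and rearranging gives $a_1^{(2)} = a_2^{(2)} + 3 a^{(3)} \psi$. The case $k = 1$ reads
\begin{equation*}
a_1^{(1)} + 2 a_1^{(2)} u_{1,0} + 3 a^{(3)} u_{1,0}^2 = a_2^{(1)} + 2 a_2^{(2)} u_{2,0} + 3 a^{(3)} u_{2,0}^2,
\end{equation*}
and substituting $u_{1,0} = u_{2,0} - \psi$ together with the formula for $a_1^{(2)}$ cancels all $u_{2,0}$-dependent terms, leaving $a_1^{(1)} = a_2^{(1)} + 2 a_2^{(2)} \psi + 3 a^{(3)} \psi^2$. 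Finally, subtracting $\Delta u_{1,0} + a_1(x, u_{1,0}) = F_1$ from $\Delta u_{2,0} + a_2(x, u_{2,0}) = F_2$, expanding $a_1(x, u_{1,0})$ via the established gauge relations and $u_{1,0} = u_{2,0} - \psi$, a direct polynomial identity forces all background-dependent contributions to cancel and yields $F_1 = F_2 - \Delta \psi - a_2^{(1)} \psi - a_2^{(2)} \psi^2 - a^{(3)} \psi^3$.

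The main obstacle lies in the higher-order linearization step. Because the two background solutions $u_{1,0}$ and $u_{2,0}$ are a priori different, the linearized operators $\Delta + \partial_z a_j(x, u_{j,0})$ differ between $j = 1$ and $j = 2$, so the integration-by-parts identities and density arguments of the higher-order linearization method must be implemented in a way that couples both linearized operators simultaneously, proceeding inductively from $k = 3$ downward: $k = 3$ is cleanest since the cubic coefficient is constant in $z$, and once $a^{(3)}$ is identified it can be fed into the $k = 2$ and $k = 1$ steps. Once the pointwise identities are secured, the gauge equations and the source equation follow from the polynomial algebra outlined above.
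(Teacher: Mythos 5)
Your proposal is correct and follows essentially the same route as the paper: successive first, second, and third order linearizations combined with linear Calder\'on uniqueness and density of products of solutions to obtain $\p_z^k a_1(x,u_1^{(0)})=\p_z^k a_2(x,u_2^{(0)})$ for $k=1,2,3$, followed by defining $\psi=u_2^{(0)}-u_1^{(0)}$ and the same polynomial algebra to extract \eqref{gauge cubic thm}. The ``obstacle'' you flag about the two linearized operators differing is resolved exactly as in the paper: the first linearization step forces the two potentials to coincide, hence $v_1^{(\ell)}=v_2^{(\ell)}$, and likewise the second-order solutions coincide before the third-order step, so the linearizations must be processed in increasing order even though the gauge is then extracted from $k=3$ downward.
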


We also consider the case of general polynomial nonlinearities. In the following theorem we denote by 
	$$
	 \LC \begin{matrix}
		m\\
		k\\\end{matrix}  \RC=\dfrac{m!}{(m-k)! k!}
	$$ 
	the usual binomial coefficients. We include a converse statement to the result.

\begin{thm}\label{Thm: gauge polynomial}
	Let $\Omega \subset\R^n$ be a bounded domain with $C^\infty$-smooth boundary $\p \Omega$,  $n\geq 2$. For $j=1,2$, let $a_j(x,z)$ be a polynomial of the form
	\begin{align}\label{a_j poly}
		a_j(x,z)=\sum_{k=1}^N a_j^{(k)}(x)z^k \quad \text{ for }(x,z)\in \overline{\Omega}\times \R,
	\end{align}
    for some $N\in \N$, where $a_j^{(k)}\in C^{\alpha}(\overline{\Omega})$, for $j=1,2$ and $k=1,\ldots, N$. 
	 Given $F_j\in C^\alpha(\overline{\Omega})$ for some $0<\alpha <1$. Let $\Lambda_{a_j,F_j}$ be the DN map of the equation 
	\begin{align}\label{eq poly gauge inva}
		\begin{cases}
			\Delta u_j +a_j (x,u_j)=F_j &\text{ in }\Omega, \\
			u_j=f&\text{ on }\p \Omega.
		\end{cases}
	\end{align}
	Suppose that there is an open set $\mathcal{N}\subset C^{2,\alpha}(\p\Omega)$  such that 
\[
\Lambda_{a_1, F_1}(f)=\Lambda_{a_2, F_2}(f) \text{ for any } f\in \mathcal{N}. 
\]
Then there exists $\psi \in C^{2,\alpha}(\overline{\Omega})$ with $\psi|_{\p \Omega}=\left. \p_\nu \psi \right|_{\p \Omega}=0$ in $\Omega$ such that 
	\begin{align}\label{gauge poly}
		a_1^{(N-k)}=\sum_{m=N-k}^N  \LC \begin{matrix}
			m\\
			N-k\\\end{matrix}  \RC a_2^{(m)}\psi ^{m-N+k} \quad \text{ and }\quad  a_1^{(N)}=a_2^{(N)}\quad \text{ in }\Omega,
	\end{align}
	for $k=1,2,\ldots, N$, and 
	\begin{equation}\label{F_rel_poly}
	        F_1=F_2-\Delta\psi-\sum_{k=1}^N a_2^{(k)}\psi^k.                                           
    \end{equation}

Conversely, if \eqref{gauge poly} and \eqref{F_rel_poly} hold for some $\psi \in C^{2,\alpha}(\overline\Omega)$ with $\psi|_{\p \Omega}=\left. \p_\nu \psi \right|_{\p \Omega}=0$, then $\Lambda_{a_1,F_1}(f)=\Lambda_{a_2,F_2}(f)$ for all $f\in C^{2,\alpha}(\p \Omega)$ for which either side of the equation is defined.
\end{thm}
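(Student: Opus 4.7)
My plan for the forward direction is to fix any $f_0 \in \mathcal{N}$, let $u_j^0 \in C^{2,\alpha}(\overline{\Omega})$ be the corresponding solutions of \eqref{eq poly gauge inva} for $j = 1, 2$, and set $\psi := u_2^0 - u_1^0$. Since $u_j^0|_{\p\Omega} = f_0$ and $\Lambda_{a_1,F_1}(f_0) = \Lambda_{a_2,F_2}(f_0)$, the difference $\psi$ automatically satisfies $\psi|_{\p\Omega} = \p_\nu\psi|_{\p\Omega} = 0$. Applying Theorem~\ref{Thm: general nonlinearity} to the DN-map hypothesis then yields the pointwise identity
\[
\p_z^k a_1(x, u_1^0(x)) = \p_z^k a_2(x, u_2^0(x)) \quad \text{for all } x \in \Omega \text{ and every } k \geq 1.
\]

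The heart of the argument is to promote this information — known only at the single value $u_j^0(x)$ of the $z$-variable — to a polynomial identity in all of $z$. Since each $a_j(x, \cdot)$ is a polynomial of degree at most $N$ in $z$, its Taylor expansion around any point is exact, so the above display forces the polynomial identity
\[
a_1(x, u_1^0 + w) - a_1(x, u_1^0) = a_2(x, u_2^0 + w) - a_2(x, u_2^0) \quad \text{for all } w \in \R.
\]
Setting $w = z - u_1^0$, so that $u_2^0 + w = z + \psi$, and using the PDEs \eqref{eq poly gauge inva} to simplify the $z$-independent term $a_1(x,u_1^0) - a_2(x,u_2^0)$ to $F_1 - F_2 + \Delta\psi$, gives
\[
a_1(x, z) = a_2(x, z + \psi(x)) + F_1 - F_2 + \Delta\psi \quad \text{in } \Omega.
\]
Evaluating at $z = 0$ and invoking \eqref{a(x,0)=0} produces precisely the source relation \eqref{F_rel_poly}; substituting \eqref{F_rel_poly} back collapses the previous display to the clean gauge identity $a_1(x, z) = a_2(x, z + \psi(x)) - a_2(x, \psi(x))$ in $\Omega$, and a direct binomial expansion of the right-hand side identifies its coefficient of $z^{N-k}$ with $\sum_{m=N-k}^N \binom{m}{N-k} a_2^{(m)} \psi^{m-N+k}$, which is exactly \eqref{gauge poly}.

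For the converse I would observe that \eqref{gauge poly} is equivalent to the single polynomial identity $a_1(x, z) = a_2(x, z + \psi) - a_2(x, \psi)$, which together with \eqref{F_rel_poly} implies: whenever $u_2$ solves $\Delta u_2 + a_2(x, u_2) = F_2$ with $u_2|_{\p\Omega} = f$, the function $u_1 := u_2 - \psi$ solves $\Delta u_1 + a_1(x, u_1) = F_1$ and, since $\psi$ has vanishing Cauchy data, shares both the Dirichlet trace $f$ and the Neumann trace of $u_2$ on $\p\Omega$. This gives $\Lambda_{a_1,F_1}(f) = \Lambda_{a_2,F_2}(f)$ on the common domain of definition. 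The only genuinely analytic input is Theorem~\ref{Thm: general nonlinearity}; everything after it is algebraic, and the main obstacle is simply the bookkeeping of the binomial expansion through the index change $k \leftrightarrow N-k$ required by the statement.
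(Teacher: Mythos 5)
Your argument is correct, and the forward direction takes a genuinely different — and slicker — route than the paper. The paper also starts from Theorem \ref{Thm: general nonlinearity} and sets $\psi=u_2^{(0)}-u_1^{(0)}$, but then proves \eqref{gauge poly} by induction on $k$, comparing coefficients of powers of $u_1^{(0)}$ in the identity $\p_z^{N-(L+1)}a_1(x,u_1^{(0)})=\p_z^{N-(L+1)}a_2(x,u_2^{(0)})$ and verifying the resulting binomial-coefficient identities case by case, with a separate computation afterwards for \eqref{F_rel_poly}. You instead exploit the fact that the Taylor expansion of a polynomial is exact: the equality of all $z$-derivatives of order $k\geq 1$ at the single points $u_1^{(0)}(x)$, $u_2^{(0)}(x)$ immediately upgrades to the global identity $a_1(x,z)=a_2(x,z+\psi)+a_1(x,u_1^{(0)})-a_2(x,u_2^{(0)})$; the PDEs turn the constant term into $F_1-F_2+\Delta\psi$, evaluation at $z=0$ gives \eqref{F_rel_poly} (using that \eqref{a_j poly} has no constant term), and a single binomial expansion of $a_2(x,z+\psi)-a_2(x,\psi)$ yields \eqref{gauge poly}. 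This buys you a much shorter proof in which the source relation and the coefficient relations come out of one identity, and it makes transparent that the whole gauge is the substitution $z\mapsto z+\psi$; the paper's induction is more laborious but stays entirely at the level of the coefficient functions, which is the form in which the statement is phrased. Your converse is also fine and matches what the paper asserts (the paper does not spell it out either); the only point worth a sentence in a full write-up is that the bijection $u_2\mapsto u_2-\psi$ respects the neighborhoods of the reference solutions used to define the two DN maps, so that "the solution" on each side is the one the well-posedness theorem selects.
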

We remark that we could have also let $N$ to be finite, but otherwise unknown, in the assumptions of the above theorem. That is, $N$ could be initially assumed to be different for coefficients $(a_1,F_1)$ and $(a_2,F_2)$. The determination result, given by \eqref{gauge poly} and \eqref{F_rel_poly}, is the same also in this case. In addition, from Theorems \ref{Thm: gauge with quadratic} to \ref{Thm: gauge polynomial}, one can see that whether the gauge function $\psi$ exists or not, the coefficients of the highest order can be always determined uniquely.

Interestingly, also for cubic and general polynomial nonlinearities \eqref{a_j poly} the gauge invariances of the corresponding inverse source problems can sometimes be broken. In Corollary \ref{Cor: Unique determination} we gave the first example where nonlinearity can be used to break the gauge of an inverse source problem. The next result shows that inverse source problems for general polynomial nonlinearities are uniquely solvable if the second to highest order term is known.
\begin{cor}[Gauge breaking in the general case]\label{Cor: Unique determination poly}
	Assume as in Theorem \ref{Thm: gauge polynomial} and adopt its notation. Suppose additionally that 
\[
 a_1^{(N-1)}=a_2^{(N-1)} \text{ in } \Omega
\]
and 
\[
 \text{ either }\quad a_1^{(N)}(x)\neq  0 \quad  \text{ or } \quad  a_2^{(N)}(x)\neq  0 \quad  \text{ for all } x\in \Omega.
\]
Then all the  coefficients  are uniquely determined:
\[
 F_1\equiv F_2 \quad \text{ and } \quad a_1^{(k)}\equiv a_2^{(k)} \text{ in }\Omega, \quad k=1,2,\ldots,N.
\]
\end{cor}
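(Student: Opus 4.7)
The plan is to combine the gauge relations \eqref{gauge poly} of Theorem \ref{Thm: gauge polynomial} with the extra hypotheses to force the gauge function $\psi$ to vanish identically, after which all remaining equalities fall out.

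First, I would apply Theorem \ref{Thm: gauge polynomial} directly: since $\Lambda_{a_1,F_1}=\Lambda_{a_2,F_2}$ on $\mathcal{N}$, there exists $\psi\in C^{2,\alpha}(\overline{\Omega})$ with $\psi|_{\p\Omega}=\p_\nu\psi|_{\p\Omega}=0$ such that the relations \eqref{gauge poly} and \eqref{F_rel_poly} hold. In particular the theorem immediately gives $a_1^{(N)}=a_2^{(N)}$ in $\Omega$, so the hypothesis ``$a_1^{(N)}(x)\neq 0$ or $a_2^{(N)}(x)\neq 0$ for all $x\in\Omega$'' upgrades to $a_2^{(N)}(x)=a_1^{(N)}(x)\neq 0$ for every $x\in\Omega$.

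Next I would extract the $k=1$ case of \eqref{gauge poly}. Only the terms $m=N-1$ and $m=N$ survive, yielding
\[
a_1^{(N-1)} = a_2^{(N-1)} + N\, a_2^{(N)}\,\psi \quad \text{in }\Omega.
\]
By the assumption $a_1^{(N-1)}=a_2^{(N-1)}$ this simplifies to $N\,a_2^{(N)}\,\psi=0$ in $\Omega$. Since $a_2^{(N)}$ is nowhere vanishing in $\Omega$, I conclude $\psi\equiv 0$ in $\Omega$.

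Finally, plugging $\psi\equiv 0$ into \eqref{gauge poly} collapses each sum to its single term $m=N-k$, giving $a_1^{(N-k)}=a_2^{(N-k)}$ for every $k=1,\ldots,N$, and \eqref{F_rel_poly} reduces to $F_1=F_2$. This completes the proof. There is no real obstacle here beyond correctly identifying the $k=1$ gauge relation as the one that forces $\psi$ to vanish; the argument is essentially algebraic once Theorem \ref{Thm: gauge polynomial} is available, which is why the result is stated as a corollary.
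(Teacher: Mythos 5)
Your argument is correct and matches the paper's own proof: extract the $k=1$ gauge relation $a_1^{(N-1)}=a_2^{(N-1)}+N\s a_2^{(N)}\psi$, use the hypotheses to force $\psi\equiv 0$, and then read off the remaining equalities from \eqref{gauge poly} and \eqref{F_rel_poly}. Your explicit use of \eqref{F_rel_poly} for $F_1=F_2$ is if anything slightly cleaner than the paper's phrasing, but the route is the same.
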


In case it is a priori known that $F_1=F_2$, then we have:
\begin{cor}
	Let us adopt the notation and assumptions in Theorem \ref{Thm: gauge polynomial}. If $F_1=F_2$ in $\Omega$, then we have 
	\begin{align}\label{unique of coeff}
		a_1^{(k)}=a_2^{(k)} \text{ in }\Omega,
	\end{align}
for $k=1,2,\ldots ,N$.
\end{cor}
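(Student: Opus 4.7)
The plan is to apply Theorem \ref{Thm: gauge polynomial} and leverage the hypothesis $F_1=F_2$ to force the gauge function to vanish identically. Once the gauge function is known to be zero, the identities \eqref{gauge poly} collapse to the desired equalities of coefficients.

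By Theorem \ref{Thm: gauge polynomial}, there exists $\psi\in C^{2,\alpha}(\overline\Omega)$ with $\psi|_{\p\Omega}=\p_\nu\psi|_{\p\Omega}=0$ satisfying the identities \eqref{gauge poly} and \eqref{F_rel_poly}. Substituting $F_1=F_2$ into \eqref{F_rel_poly} yields
\[
\Delta\psi+\sum_{k=1}^N a_2^{(k)}\psi^k=0\quad\text{in }\Omega.
\]
Rewriting the polynomial part as $V\psi$, with $V(x):=\sum_{k=1}^N a_2^{(k)}(x)\psi(x)^{k-1}\in C^{\alpha}(\overline\Omega)$, this becomes a homogeneous linear elliptic equation $\Delta\psi+V\psi=0$ for which $\psi$ has vanishing Cauchy data on $\p\Omega$.

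The hard part is to conclude $\psi\equiv 0$ in $\Omega$; this is the only nontrivial ingredient. I would extend $\psi$ by zero across $\p\Omega$ into a slightly larger bounded domain $\widetilde\Omega\supset\overline\Omega$. Since $\psi$ and $\p_\nu\psi$ both vanish on $\p\Omega$, the extension $\widetilde\psi$ lies in $C^{1,\alpha}(\widetilde\Omega)$ and its distributional Laplacian acquires no surface contribution on $\p\Omega$. Hence $\widetilde\psi$ solves $\Delta\widetilde\psi+\widetilde V\widetilde\psi=0$ in $\widetilde\Omega$ for any bounded extension $\widetilde V$ of $V$, and it vanishes on the nonempty open set $\widetilde\Omega\setminus\overline\Omega$. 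The weak unique continuation principle for second order linear elliptic equations with bounded coefficients then forces $\widetilde\psi\equiv 0$ in $\widetilde\Omega$, so $\psi\equiv 0$ in $\Omega$.

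Feeding $\psi\equiv 0$ into the first identity in \eqref{gauge poly}, every summand with $m>N-k$ carries a positive power of $\psi$ and therefore vanishes, leaving $a_1^{(N-k)}=a_2^{(N-k)}$ for $k=1,\ldots,N-1$; combined with $a_1^{(N)}=a_2^{(N)}$ already contained in \eqref{gauge poly}, this yields \eqref{unique of coeff}. Thus the whole argument reduces to a boundary unique continuation statement followed by a mechanical collapse of the gauge identities at $\psi=0$.
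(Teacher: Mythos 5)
Your proposal is correct and follows essentially the same route as the paper: substitute $F_1=F_2$ into \eqref{F_rel_poly}, conclude $\psi\equiv 0$ by unique continuation from the vanishing Cauchy data $\psi|_{\p\Omega}=\p_\nu\psi|_{\p\Omega}=0$, and then collapse the identities \eqref{gauge poly}. The only (immaterial) difference is in the unique continuation step: the paper bounds $|\Delta\psi|\leq C|\psi|$ and invokes unique continuation for differential inequalities, whereas you factor the polynomial as $V\psi$ with $V\in C^\alpha(\overline\Omega)$ and use extension by zero together with interior weak unique continuation for $\Delta+V$ with bounded potential; both are standard and valid.
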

The above corollary in particularly says the following. If we consider inverse problem for the equation
\[
 \Delta u  +qu+ u^2=F, \quad F \text{ known},
\]
then we can recover the lower order term $q$ from the DN map.

We also study an inverse source problem for general semilinear elliptic equations and do not assume that the nonlinearity is necessarily a polynomial. In fact, we will prove Theorem \ref{Thm: general nonlinearity} below before Theorem \ref{Thm: gauge polynomial} for convenience.
\begin{thm}\label{Thm: general nonlinearity}
	Let $\Omega \subset\R^n$ be a bounded domain with $C^\infty$-smooth boundary $\p \Omega$,  $n\geq 2$.
	For $j=1,2$, let $a_j(\ccdot,z)\in C^\alpha(\overline{\Omega})$ satisfy the condition \eqref{a(x,0)=0}
	and assume that $a_j(x,z)$ is $C^\infty$-smooth with respect to the $z$-variable. Given $F_j\in C^\alpha(\overline{\Omega})$ for some $0<\alpha <1$, let $\Lambda_{a_j,F_j}$ be the DN map of 
	\begin{align}\label{semilinear ellip index}
		\begin{cases}
			\Delta u_j +a_j (x,u_j)=F_j &\text{ in }\Omega, \\
			u_j=f&\text{ on }\p \Omega.
		\end{cases}
	\end{align}
	Suppose that there is an open set $\mathcal{N}\subset C^{2,\alpha}(\p\Omega)$  such that 
\[
\Lambda_{a_1, F_1}(f)=\Lambda_{a_2, F_2}(f) \text{ for any } f\in \mathcal{N}. 
\]
	Then, for any $f_0\in \mathcal{N}$, we have 
	\begin{align}\label{same Taylor coef}
		\p_z^k a_1(x,u_1^{(0)}(x))= \p_z^k a_2(x,u_2^{(0)}(x)), \quad  x\in \Omega,
	\end{align}
	for any $k\in \N$. Here $u_1^{(0)}$ and $u_2^{(0)}$ are the solutions of \eqref{semilinear ellip index} with boundary condition $u_j^{(0)} \big|_{\p \Omega}=f_0$. 
\end{thm}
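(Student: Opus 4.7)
The plan is to apply the higher order linearization method. Since $\mathcal{N}$ is open, for any $f_0\in\mathcal{N}$ and any $f_1,\ldots,f_N\in C^{2,\alpha}(\p\Omega)$ the boundary data $f_\epsilon:=f_0+\sum_{i=1}^N\epsilon_i f_i$ lies in $\mathcal{N}$ for $|\epsilon|$ small; by Theorem~\ref{Thm:wellposedness_and_expansion} the corresponding solutions $u_j(x;\epsilon)$ depend smoothly on $\epsilon$ with $u_j(x;0)=u_j^{(0)}(x)$. Differentiating the identity $\Lambda_{a_1,F_1}(f_\epsilon)=\Lambda_{a_2,F_2}(f_\epsilon)$ any number of times in the $\epsilon_i$'s at $\epsilon=0$ produces, at each order, equality of the Neumann traces on $\p\Omega$ of the corresponding mixed linearizations of $u_1$ and $u_2$. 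I would then proceed by induction on $k\geq 1$.

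For the base case, the first linearizations $v_j^{(i)}:=\p_{\epsilon_i}u_j|_{\epsilon=0}$ satisfy
\[
\Delta v_j^{(i)}+q_j v_j^{(i)}=0 \text{ in }\Omega,\qquad v_j^{(i)}|_{\p\Omega}=f_i,
\]
with potentials $q_j(x):=\p_z a_j(x,u_j^{(0)}(x))$. The matching Neumann traces give $\Lambda_{q_1}(f_i)=\Lambda_{q_2}(f_i)$ for all $f_i\in C^{2,\alpha}(\p\Omega)$. The Calder\'on problem for Schr\"odinger operators---Sylvester--Uhlmann for $n\geq 3$, Bukhgeim's theorem for $n=2$---then forces $q_1=q_2$, which is the assertion at $k=1$. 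Since the potentials coincide, $v_1^{(i)}=v_2^{(i)}=:v^{(i)}$ in $\Omega$ by uniqueness of the Dirichlet problem for $\Delta+q$ (whose kernel is trivial by the hypothesis in Theorem~\ref{Thm:wellposedness_and_expansion}).

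For the inductive step, assume the conclusion holds for all orders strictly less than $k$, so that (by the same uniqueness) every mixed linearization of $u_j$ of order $<k$ is common to $j=1,2$. The $k$-th mixed linearization $W_j:=\p_{\epsilon_1}\cdots\p_{\epsilon_k}u_j|_{\epsilon=0}$ satisfies, via the chain rule applied to $a_j(x,u_j(x;\epsilon))$, an equation of the form
\[
\Delta W_j+qW_j=-\p_z^k a_j(x,u_j^{(0)})\,v^{(1)}\cdots v^{(k)}+R_j,\qquad W_j|_{\p\Omega}=0,
\]
where $R_j$ depends only on $\p_z^m a_j(\cdot,u_j^{(0)})$ with $m<k$ and on linearizations of $u_j$ of order $<k$; hence $R_1=R_2$ by the inductive hypothesis. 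Subtracting the two equations, multiplying by an auxiliary solution $v^{(0)}$ of $\Delta v^{(0)}+qv^{(0)}=0$ with arbitrary Dirichlet data, and integrating by parts---the boundary terms vanish because $W_1-W_2$ has zero Dirichlet trace and matching Neumann trace---yields
\[
\int_\Omega\bigl(\p_z^k a_1(x,u_1^{(0)})-\p_z^k a_2(x,u_2^{(0)})\bigr)\prod_{i=0}^k v^{(i)}\,dx=0
\]
for every $(k+1)$-tuple of solutions of $\Delta v+qv=0$. Since $k+1\geq 3$, density of products of three or more Schr\"odinger solutions in $L^1(\Omega)$ via complex geometric optics, valid in every dimension $n\geq 2$, forces the factor in parentheses to vanish pointwise, completing the induction.

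The main obstacle is the base case, which is essentially the Calder\'on problem for Schr\"odinger operators and therefore requires dimension-dependent arguments and the corresponding regularity of $q_j$; a secondary bookkeeping point is verifying that every lower order contribution $R_j$ in the $k$-th linearization depends only on data already matched by the inductive hypothesis, which requires care with the chain rule applied at the distinct background solutions $u_1^{(0)}$ and $u_2^{(0)}$.
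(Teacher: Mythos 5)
Your proposal is correct and follows essentially the same route as the paper: higher order linearization at $f_0$, the Calder\'on/Schr\"odinger uniqueness result at first order to identify the potentials $\p_z a_j(x,u_j^{(0)})$ and hence the first linearizations, and then an induction in which the $k$-th linearized equation is tested against an auxiliary solution and the resulting integral identity is killed by density of products of CGO solutions. The only difference is presentational: the paper carries out orders one and two explicitly and defers the general induction to cited works, whereas you spell out the Fa\`a di Bruno structure of the remainder $R_j$ and why it is matched by the inductive hypothesis.
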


As a corollary to Theorem \ref{Thm: general nonlinearity}, we do case studies of inverse source problems when the nonlinearity of the model is either of exponential type or $a(x,z)=q(x)\sin(z)$. 
These models arise in mathematical modeling of combustion, where the nonlinearity involved is of exponential type (see e.g. \cite{volpert2014elliptic}). The nonlinearity $a(x,z)=q(x)\sin(z)$ corresponds to the sine-Gordon equation. The DN map and inverse problems for the sine-Gordon equation have been considered for example in \cite{BK1989inverse,FP2012dirichlet}. The models are chosen so to give  examples of cases where the inverse source problem is uniquely solvable, or has an explicit gauge symmetry.

Let $q$ and $F$ belong to $C^\alpha (\overline{\Omega})$, and consider the semilinear elliptic equations 
\begin{align}\label{equ exponential nonlinearity}
	\begin{cases}
		\Delta u + q(x)e^u =F &\text{ in }\Omega, \\
		u=f &\text{ on }\p \Omega.
	\end{cases}
\end{align}
and 
\begin{align}\label{equ exponential nonlinearity with u}
	\begin{cases}
		\Delta u + q(x)ue^u =F &\text{ in }\Omega, \\
		u=f &\text{ on }\p \Omega.
	\end{cases}
\end{align}
For the corresponding inverse source problems we assume that both of the above boundary value problems have a solution $u_0$ for some boundary value $f_0$ such that $0$ is not an eigenvalue of $\Delta + \p_za(x,u_0)$. 
%
In this case, it follows from Theorem \ref{Thm:wellposedness_and_expansion} that  the DN maps $\mathcal{N} \to C^{1,\alpha}(\p \Omega)$ are defined on an open subset $\mathcal{N}\subset C^{2,\alpha}(\p M)$ as before by
\[
u\mapsto \left. \p_\nu u_f\right|_{\p \Omega}.
\]
Here, $u_f$ is the unique solution on a neighborhood of $u_0$  to either \eqref{equ exponential nonlinearity} or \eqref{equ exponential nonlinearity with u} depending on which of the two models we are considering. 
We remark that in the case $q\leq 0$ the above assumptions are satisfied and Theorem \ref{Thm:wellposedness_and_expansion} holds  for \eqref{equ exponential nonlinearity} by \cite[Theorem 15.12]{gilbarg2015elliptic}. In this case, also \eqref{equ exponential nonlinearity with u}  has a solution $u_0$ for a given boundary value $f_0\in C^{2,\alpha}(\p \Omega)$ by \cite[Theorem 15.12]{gilbarg2015elliptic}. However, to apply Theorem \ref{Thm:wellposedness_and_expansion}, one still needs to assume that $0$ is not an eigenvalue of $\Delta + \p_za(x,u_0)$. We also remark that if $F$ is assumed to be small enough, the DN maps of \eqref{equ exponential nonlinearity} and \eqref{equ exponential nonlinearity with u} are well-defined by Proposition \ref{Prop: well-posed samll inputs} on a neighborhood of the zero boundary value.

For the nonlinearity $a(x,z)=q(x) e^z$, the inverse source problem is not uniquely solvable due to a gauge symmetry. However, if the nonlinearity is $q(x) ze^z$, and $q(x)\neq 0$ for $x\in \Omega$, the corresponding inverse source problem has a unique solution.

\begin{cor}\label{Cor: Exponential} Let $\Omega \subset\R^n$ be a bounded domain with $C^\infty$-smooth boundary $\p \Omega$,  $n\geq 2$.
Let $q_j\in C^{\alpha}(\overline{\Omega})$, and suppose  additionally that 

\smallskip

	\textbf{Case 1.}
	
	\[
	a_j(x,z)=q_j(x)e^z;
	\]
	
		\textbf{Case 2.}
		
		\[
		a_j(x,z)=q_j(x) ze^z,
		\]
	with $q_j\neq 0$ in $\Omega$, for $j=1,2$.
	
	\smallskip
	
	Suppose that there is an open $\mathcal{N}\subset C^{2,\alpha}(\p\Omega)$  such that the corresponding DN maps $\Lambda_{a_j,F_j}$ of the equation
\begin{align*}
		\begin{cases}
			\Delta u_j +a_j (x,u_j)=F_j &\text{ in }\Omega, \\
			u_j=f&\text{ on }\p \Omega
		\end{cases}
	\end{align*}
	satisfy
	\[
\Lambda_{a_1, F_1}(f)=\Lambda_{a_2, F_2}(f) \text{ for any } f\in \mathcal{N}. 
\] 
Then we have:

\smallskip 

\textbf{Case 1.} Gauge symmetry:
	\begin{align}\label{gauge exponential}
		q_1=q_2e^{\psi} \quad \text{ and }\quad F_1=F_2-\Delta \psi \text{ in }\Omega.
	\end{align}
Conversely, if  \eqref{gauge exponential} holds for some $\psi \in C^{2,\alpha}(\overline\Omega)$ with $\psi|_{\p \Omega}=\left. \p_\nu \psi \right|_{\p \Omega}=0$, then $\Lambda_{a_1,F_1}(f)=\Lambda_{a_2,F_2}(f)$ for all $f\in C^{2,\alpha}(\p \Omega)$ for which either side of the equation is defined. 

   \textbf{Case 2.} Unique determination:
    \begin{align}\label{uniqueness exponential}
   	q_1=q_2 \quad \text{ and }\quad F_1=F_2 \text{ in }\Omega.
   \end{align}

%
%
%
\end{cor}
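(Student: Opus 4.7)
The plan is to apply Theorem~\ref{Thm: general nonlinearity} to each of the two cases and decode the resulting pointwise identities using the special algebraic structure of the exponential.

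For Case 1, every $z$-derivative of $a_j(x,z)=q_j(x)e^z$ equals $q_j(x)e^z$ itself, so the conclusion of Theorem~\ref{Thm: general nonlinearity} collapses to the single identity
\[
q_1(x)\s e^{u_1^{(0)}(x)} = q_2(x)\s e^{u_2^{(0)}(x)}, \quad x\in\Omega,
\]
where $u_j^{(0)}$ are the solutions associated to an arbitrary fixed boundary datum $f_0\in\mathcal{N}$. I would then set $\psi:=u_2^{(0)}-u_1^{(0)}$, which lies in $C^{2,\alpha}(\overline\Omega)$. Since the two solutions share both Dirichlet data $f_0$ and Neumann data on $\p\Omega$ (the latter because $\Lambda_{a_1,F_1}(f_0)=\Lambda_{a_2,F_2}(f_0)$), one obtains $\psi|_{\p\Omega}=\p_\nu \psi|_{\p\Omega}=0$. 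Rearranging the identity gives $q_1=q_2 e^{\psi}$, and subtracting the two PDEs produces $F_1=F_2-\Delta\psi$ after the nonlinear terms cancel by the identity above. For the converse, a direct substitution shows that if $u_1$ solves the first equation then $u_2:=u_1+\psi$ solves the second with identical Cauchy data, so the DN maps agree wherever either is defined.

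For Case 2, a one-line induction gives $\p_z^k(ze^z)=(k+z)e^z$ for all $k\geq 0$, so Theorem~\ref{Thm: general nonlinearity} now yields the whole family
\[
q_1(x)(k+u_1^{(0)}(x))\s e^{u_1^{(0)}(x)} = q_2(x)(k+u_2^{(0)}(x))\s e^{u_2^{(0)}(x)}, \quad k\in\N.
\]
The key maneuver is to subtract the $k=0$ and $k=1$ identities: the $u_j^{(0)}$-containing terms cancel and one is left with $q_1\s e^{u_1^{(0)}}=q_2\s e^{u_2^{(0)}}$. By the nonvanishing hypothesis on $q_j$ neither side vanishes anywhere in $\Omega$, so dividing the $k=0$ identity by this relation yields $u_1^{(0)}=u_2^{(0)}$ pointwise in $\Omega$. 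Substituting back gives $q_1=q_2$, and subtracting the two PDEs finally produces $F_1=F_2$.

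The only subtle point is that the pointwise nonvanishing of $q_j$ in Case 2 is used precisely to make the division at the final step legitimate; without it, a gauge freedom would persist on the zero set of $q_j$, in direct analogy with Case 1. Apart from that, the proof is a direct algebraic decoding of the Taylor-coefficient information provided by Theorem~\ref{Thm: general nonlinearity}, combined with the boundary-data considerations for the gauge function $\psi$.
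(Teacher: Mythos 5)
Your Case 1 argument and its converse coincide with the paper's proof: the single identity $q_1e^{u_1^{(0)}}=q_2e^{u_2^{(0)}}$ coming from the first linearization, the definition $\psi=u_2^{(0)}-u_1^{(0)}$ with vanishing Cauchy data, the subtraction of the two PDEs, and the direct substitution $u_2=u_1+\psi$ for the converse are exactly the steps the paper takes.

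In Case 2 there is one indexing slip you should repair. Theorem \ref{Thm: general nonlinearity} yields \eqref{same Taylor coef} only for $k\in\N$, i.e.\ for genuine $z$-derivatives $k\geq 1$; its proof goes through first, second, \dots\ linearizations and never produces the ``$k=0$'' identity $a_1(x,u_1^{(0)})=a_2(x,u_2^{(0)})$. That identity is not available a priori: the agreement of the DN maps at $f_0$ only tells you that $u_1^{(0)}$ and $u_2^{(0)}$ share Cauchy data on $\p\Omega$, not that the nonlinear terms agree in the interior (if it came for free, much of the paper would trivialize). So your subtraction of the ``$k=0$'' and $k=1$ identities rests on an unproved input. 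The repair is immediate and is what the paper does: subtract the $k=1$ and $k=2$ identities, $q_1(u_1^{(0)}+1)e^{u_1^{(0)}}=q_2(u_2^{(0)}+1)e^{u_2^{(0)}}$ and $q_1(u_1^{(0)}+2)e^{u_1^{(0)}}=q_2(u_2^{(0)}+2)e^{u_2^{(0)}}$, to obtain $q_1e^{u_1^{(0)}}=q_2e^{u_2^{(0)}}$, and then feed this back into the $k=1$ identity to get $q_1u_1^{(0)}e^{u_1^{(0)}}=q_2u_2^{(0)}e^{u_2^{(0)}}$ — your ``$k=0$'' identity, now as a derived consequence. From there your division by the nowhere-vanishing $q_2e^{u_2^{(0)}}$, the conclusion $u_1^{(0)}=u_2^{(0)}$, and hence $q_1=q_2$ and $F_1=F_2$, go through exactly as in the paper.
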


As the second application of Theorem \ref{Thm: general nonlinearity}, we consider the inverse source problem for the elliptic sine-Gordon equation. Again, let $q$ and $F$ belong to $C^{\alpha}(\overline{\Omega})$, and assume that the equation 
\begin{align}\label{sine-Gordon equ}
	\begin{cases}
		\Delta u + q \sin u =F &\text{ in }\Omega, \\
		u=f &\text{ on }\p \Omega,
	\end{cases}
\end{align}
has a solution for some boundary value $f_0\in C^{2,\alpha}(\p \Omega)$ such that $0$ is not an eigenvalue of $\Delta + \p_za(x,u_0)$.  Then the equation is well-posed on a neighborhood $\mathcal{N}\subset C^{2,\alpha}(\p \Omega)$ of $f_0$ by Theorem \ref{Thm:wellposedness_and_expansion}. The DN map of \eqref{sine-Gordon equ} is again defined by
\[
\Lambda_{q,F} :\mathcal{N} \to C^{1,\alpha}(\p \Omega), \qquad u\mapsto \left. \p_\nu u_f\right|_{\p \Omega},
\]
where $u_f\in C^{2,\alpha}(\overline{\Omega})$ is the unique solution to \eqref{sine-Gordon equ} on a neighborhood of $u_{f_0}$. If $F$ is assumed to be small enough, the DN map of \eqref{equ exponential nonlinearity with u} is well defined by Proposition \ref{Prop: well-posed samll inputs} on a neighborhood of the zero boundary value.

For the sine-Gordon equation, the inverse source problem is solvable. 
\begin{cor}\label{Cor:sine-Gordon}
Let $\Omega \subset\R^n$ be a bounded domain with $C^\infty$-smooth boundary $\p \Omega$,  $n\geq 2$.
Let $q\in C^{\alpha}(\overline{\Omega})$, and suppose  additionally that 
\begin{align}
	a_j(x,z)=q_j(x)\sin z,
\end{align}
for $j=1,2$. Suppose that there is an open set $\mathcal{N}\subset C^{2,\alpha}(\p\Omega)$  such that the corresponding DN maps $\Lambda_{q_j, F_j}$ of the equation
%
\begin{align}\label{sine-Gordon thm}
	\begin{cases}
		\Delta u_j +q_j\sin (u_j)=F_j &\text{ in }\Omega, \\
		u_j=f&\text{ on }\p \Omega.
	\end{cases}
\end{align}
satisfy 
\[
\Lambda_{q_1, F_1}(f)=\Lambda_{q_2, F_2}(f) \text{ for any } f\in \mathcal{N}.
\]
Then 
\begin{align}\label{unique sine-Gordon cor}
	q_1=q_2 \quad \text{ and }\quad F_1=F_2 \text{ in }\Omega.
\end{align}
\end{cor}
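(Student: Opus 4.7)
The plan is to apply Theorem \ref{Thm: general nonlinearity} with $a_j(x,z) = q_j(x) \sin z$. Since $\p_z^k a_j$ cycles through $\pm q_j \sin$ and $\pm q_j \cos$, the identities \eqref{same Taylor coef} yield, for any fixed $f_0 \in \mathcal{N}$ and the corresponding solutions $u_j := u_j^{(0)}$, only two independent pointwise relations:
\[
q_1 \sin u_1 = q_2 \sin u_2 \quad \text{and} \quad q_1 \cos u_1 = q_2 \cos u_2 \quad \text{in } \Omega.
\]
By the angle-subtraction formulas, these combine to $q_1 \sin(u_1 - u_2) = 0$ and $q_1 \cos(u_1 - u_2) = q_2$, and, together with the analogous relations with $q_1$ and $q_2$ interchanged, they force $q_1^2 = q_2^2$ throughout $\Omega$. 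Setting $w := u_1 - u_2$ and subtracting the two equations in \eqref{sine-Gordon thm} cancels the nonlinear terms, so $\Delta w = F_1 - F_2$ in $\Omega$ with $w = 0$ on $\p \Omega$ (common boundary value $f_0$) and, thanks to the equality of the DN maps, $\p_\nu w = 0$ on $\p \Omega$.

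The central step is to show $u_1 = u_2$, from which $q_1 = q_2$ and $F_1 = F_2$ will follow. I would proceed by linearizing \eqref{sine-Gordon thm} in the boundary data around $f_0$: for a perturbation direction $g \in C^{2,\alpha}(\p \Omega)$, the linearized solutions $v_j$ satisfy
\[
\Delta v_j + q_j (\cos u_j) v_j = 0 \quad \text{in } \Omega, \qquad v_j|_{\p\Omega} = g,
\]
and the DN equality gives $\p_\nu v_1 = \p_\nu v_2$ on $\p \Omega$. The key point is that the identity $q_1 \cos u_1 = q_2 \cos u_2$ holds on all of $\Omega$, so $V := v_1 - v_2$ satisfies the single linear equation $\Delta V + (q_1 \cos u_1) V = 0$ with vanishing Cauchy data on $\p \Omega$; extending by zero across $\p \Omega$ and applying Aronszajn's unique continuation then forces $V \equiv 0$. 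Iterating this scheme on higher-order linearizations (noting that each reduced equation closes because all the coefficients $q_j \p_z^k a_j(x, u_j)$ are common) shows that every Taylor coefficient of the map $f \mapsto u_1^{(f)} - u_2^{(f)}$ at $f_0$ vanishes.

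I would close by invoking real-analyticity of the solution operator $f \mapsto u_j^{(f)}$, which holds because $u \mapsto \sin u$ is an analytic Nemitski operator and the analytic implicit function theorem applies in the well-posedness framework of Theorem \ref{Thm:wellposedness_and_expansion}; matching Taylor coefficients then forces $u_1^{(f)} = u_2^{(f)}$ for $f$ in a neighborhood of $f_0$, hence $u_1 = u_2$ in $\Omega$. Substituting back into the identities from the first paragraph gives $(q_1 - q_2)\sin u_1 = (q_1 - q_2)\cos u_1 = 0$, so $q_1 = q_2$ in $\Omega$, and $F_1 = F_2$ is then immediate from \eqref{sine-Gordon thm}. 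The main obstacle will be rigorously justifying the analyticity step; as a safeguard, one can exploit the fact that $w, \p_\nu w$ vanish on $\p \Omega$, so there is a neighborhood $U$ of $\p \Omega$ on which $|w| < \pi$, and on $U$ the relation $q_1 \sin w = 0$ forces $w = 0$ wherever $q_1 \neq 0$, giving $q_1 = q_2$ near the boundary, which one would then propagate to all of $\Omega$ via a Runge-type density argument using products of first-order linearizations.
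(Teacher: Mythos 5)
The central step of your argument --- deducing $u_1^{(0)}=u_2^{(0)}$ by showing that ``every Taylor coefficient of $f\mapsto u_1^{(f)}-u_2^{(f)}$ at $f_0$ vanishes'' and then invoking analyticity --- does not work, and the failure is structural rather than technical. The linearizations of order $k\geq 1$ control only the derivatives of order $k\geq 1$ of the map $f\mapsto u_1^{(f)}-u_2^{(f)}$; the zeroth Taylor coefficient is $u_1^{(0)}-u_2^{(0)}$ itself, which no linearization sees and which is exactly the quantity you are trying to determine. Even granting analyticity of the solution operator, your argument only yields that $u_1^{(f)}-u_2^{(f)}$ is \emph{independent of} $f$ near $f_0$, i.e.\ that the two solution families differ by a fixed function --- which is precisely the gauge phenomenon, not its absence. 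If this step were valid, the same reasoning would prove unique determination of the source for \emph{every} nonlinearity, contradicting Remark \ref{rmk:counterexample} and Case 1 of Corollary \ref{Cor: Exponential}, where the DN maps coincide while the sources differ.

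What actually closes the argument is the trigonometric rigidity that your first paragraph stops just short of exploiting. From $q_1\cos u_1^{(0)}=q_2\cos u_2^{(0)}$ and $q_1\sin u_1^{(0)}=q_2\sin u_2^{(0)}$ (Theorem \ref{Thm: general nonlinearity} with $k=1,2$) one gets $q_1e^{\mathbf{i}u_1^{(0)}}=q_2e^{\mathbf{i}u_2^{(0)}}$, i.e.\ $q_1=q_2e^{\mathbf{i}\psi}$ with $\psi:=u_2^{(0)}-u_1^{(0)}$; since $q_1,q_2$ are real and $\psi$ is continuous, $e^{\mathbf{i}\psi}$ is forced to be the constant $+1$ or $-1$, so $\psi$ is constant. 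A boundary determination step for the first linearized Schr\"odinger equation, combined with $u_j^{(0)}|_{\p\Omega}=f_0$, recovers $q_j\cos f_0$ on $\p\Omega$ and fixes the sign to $+1$; since $\psi$ is constant and vanishes on $\p\Omega$, $\psi\equiv 0$, whence $q_1=q_2$ and, from $\Delta\psi=F_2-F_1$, also $F_1=F_2$. Your ``safeguard'' paragraph (using $q_1\sin w=0$, $q_1\cos w=q_2$ and $|w|<\pi$ near $\p\Omega$) is much closer to this and does give $q_1=q_2$ in a boundary layer, but the proposed propagation to the interior by a ``Runge-type density argument'' is not substantiated: Runge approximation is used in this paper to derive integral identities for the \emph{coefficients} from the linearized equations, and it gives no mechanism for propagating a pointwise identity between the zeroth-order solutions. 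The propagation you need is exactly the constancy of $\psi$ coming from $e^{\mathbf{i}\psi}\in\{\pm 1\}$.
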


The paper is organized as follows. In Section \ref{Sec 2}, we prove a well-posedness result for semilinear elliptic equations with sources. Moreover, a local well-posedness result is also given in Section \ref{Sec 2}, and the proof is left in Appendix \ref{Appendix}.
In Section \ref{Sec 3}, we prove Theorems \ref{Thm: gauge with quadratic}, \ref{Thm: gauge with cubic}, \ref{Thm: gauge polynomial} and \ref{Thm: general nonlinearity} by using the higher order linearization method. We prove Corollaries \ref{Cor: Exponential} and \ref{Cor:sine-Gordon} in Section \ref{Sec 4}. 

\section{Preliminaries}\label{Sec 2}
In this section, we prove a local well-posedness result for the Dirichlet problem \eqref{main equation} on a neighborhood of a given solution. Let $0<\alpha<1$ and $\delta >0$ and denote
\begin{align}\label{N_delta}
	\mathcal{N}_\delta := \left\{ f\in C^{2,\alpha}(\p \Omega): \, \norm{f}_{C^{2,\alpha}(\p \Omega)}\leq \delta  \right\}.
\end{align}
Note that when the source function $F$ of the equation $\Delta u(x) + a(x,u)=F(x)$ does not vanish, zero function is not a solution to the equation \eqref{main equation}. This is the main reason why our well-posedness result differs from the usual ones, such as the one in \cite{LLLS2019nonlinear,KU2019partial}.

\begin{thm}[Well-posedness]\label{Thm:wellposedness_and_expansion}
		Let $\Omega\subset \R^n$ be a bounded domain with $C^\infty$ boundary $\p \Omega$ and $n\geq 2$. Given $\alpha \in (0,1)$, $F \in C^{2,\alpha}(\overline{\Omega})$ and $f_0\in C^{2,\alpha}(\p \Omega)$, suppose that there exists a solution $u_0\in C^{2,\alpha}(\overline{\Omega})$ to   
		\begin{align}\label{solvability 0}
			\begin{cases}
				\Delta u_0+a(x,u_0)=F & \text{ in } \Omega,\\
				u= f_0 & \text{ on } \p \Omega.
			\end{cases}
		\end{align}		
		Assume also  that 
		\begin{align}\label{eigenvalue}
			0 \text{ is not a Dirichlet eigenvalue of }\Delta + \p _z a (x,u_0) \text{ in }\Omega.
		\end{align}
		Then there are $\delta>0$ and $C >0$ such that for any $f\in \mathcal{N}_\delta$ there exists a unique solution $u\in C^{2,\alpha}(\overline{\Omega})$ of  
		\begin{align}\label{solvability}
			\begin{cases}
				\Delta u+a(x,u)=F & \text{ in } \Omega,\\
				u= f_0+f & \text{ on } \p \Omega,
			\end{cases}
		\end{align}
		within the class $\left\{ w \in C^{2,\alpha}(\overline{\Omega}) :\, \norm{w-u_0}_{C^{2,\alpha}(\overline{\Omega})} \leq C \right\}$. 
		Moreover, there are $C^{\infty}$ Fr\'echet differentiable maps 
		\begin{align*}
			\begin{array}{rll}
				\mathcal{S}:& \!\!\!\mathcal{N}_{\delta} \to C^{2,\alpha}(\overline{\Omega}), \ & f \mapsto u, \\
				\Lambda:& \!\!\!\mathcal{N}_{\delta} \to C^{1,\alpha}(\p \Omega), \ &f \mapsto \left. \p_{\nu} u \right|_{\p \Omega}.
			\end{array}
		\end{align*}
	\end{thm}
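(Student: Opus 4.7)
The plan is to deduce well-posedness from the Implicit Function Theorem in Banach spaces applied to the nonlinear map
\[
\Phi: C^{2,\alpha}(\overline{\Omega}) \times C^{2,\alpha}(\p \Omega) \to C^{\alpha}(\overline{\Omega}) \times C^{2,\alpha}(\p \Omega), \quad \Phi(u,f) := \bigl(\Delta u + a(\cdot,u) - F,\; u|_{\p \Omega} - f_0 - f\bigr).
\]
By the assumption \eqref{solvability 0} we have $\Phi(u_0,0)=(0,0)$, so solving \eqref{solvability} for $f$ near $0$ amounts to finding $u$ near $u_0$ with $\Phi(u,f)=0$. Since $a(x,z)$ is $C^\infty$ in the $z$-variable, the associated Nemytskii operator $u\mapsto a(\cdot,u)$ is $C^\infty$ Fr\'echet differentiable from $C^{2,\alpha}(\overline{\Omega})$ into itself; composing with the bounded linear Laplacian $\Delta: C^{2,\alpha}(\overline{\Omega}) \to C^{\alpha}(\overline{\Omega})$ and the Dirichlet trace gives that $\Phi$ is $C^\infty$ Fr\'echet differentiable.

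The partial Fr\'echet derivative with respect to $u$ at $(u_0,0)$ is the linear map
\[
D_u \Phi(u_0,0)[v] = \bigl(\Delta v + \p_z a(x,u_0)\, v,\; v|_{\p \Omega}\bigr),
\]
which I claim is a Banach space isomorphism. Boundedness is immediate, and bijectivity follows from the fact that under the eigenvalue assumption \eqref{eigenvalue}, the Dirichlet problem
\[
\Delta v + \p_z a(x,u_0) v = g \text{ in }\Omega, \qquad v|_{\p \Omega} = h,
\]
is uniquely solvable for every $(g,h) \in C^{\alpha}(\overline{\Omega}) \times C^{2,\alpha}(\p \Omega)$ by the Fredholm alternative for second-order linear elliptic operators; the Schauder estimate $\norm{v}_{C^{2,\alpha}(\overline{\Omega})} \leq C\bigl(\norm{g}_{C^\alpha(\overline{\Omega})} + \norm{h}_{C^{2,\alpha}(\p\Omega)}\bigr)$ then gives continuity of the inverse.

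The Implicit Function Theorem now yields $\delta,C>0$ and a $C^\infty$ map $\mathcal{S}:\mathcal{N}_\delta \to C^{2,\alpha}(\overline{\Omega})$ with $\mathcal{S}(0) = u_0$, $\Phi(\mathcal{S}(f),f)=0$, and with $\mathcal{S}(f)$ the unique zero of $\Phi(\ccdot,f)$ in the $C$-ball around $u_0$ in $C^{2,\alpha}(\overline{\Omega})$. Unwinding the definition of $\Phi$, $u:=\mathcal{S}(f_0+f)$ is precisely the required solution of \eqref{solvability}. The DN map is then
\[
\Lambda(f) := \left. \p_\nu \mathcal{S}(f) \right|_{\p \Omega},
\]
the composition of the $C^\infty$ solution map $\mathcal{S}$ with the bounded linear normal-trace operator $C^{2,\alpha}(\overline{\Omega}) \to C^{1,\alpha}(\p \Omega)$, and is therefore itself $C^\infty$ Fr\'echet differentiable. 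The only mildly delicate step is verifying smoothness of the Nemytskii operator on H\"older spaces, which is standard given that $a$ is $C^\infty$ in $z$ and $C^{2,\alpha}(\overline{\Omega})$ is a Banach algebra; the remaining ingredients are the Fredholm alternative and interior/boundary Schauder estimates.
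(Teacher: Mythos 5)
Your proposal is correct and follows essentially the same route as the paper: both apply the implicit function theorem in Banach spaces to the map $(f,u)\mapsto \bigl(\Delta u + a(\cdot,u)-F,\ u|_{\p\Omega}-(f_0+f)\bigr)$, verify that the linearization in $u$ at $(0,u_0)$ is an isomorphism via the eigenvalue condition \eqref{eigenvalue} and Schauder theory, and rely on smoothness of the Nemytskii operator $u\mapsto a(\cdot,u)$ on H\"older spaces. The only slip is notational: at the end the solution should be $u=\mathcal{S}(f)$ rather than $\mathcal{S}(f_0+f)$, since the shift by $f_0$ is already built into your map $\Phi$.
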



	\begin{proof}
	We use the standard method, which uses the implicit function theorem in Banach spaces to prove the theorem. A similar proof can be found from the work \cite{LLLS2019nonlinear} where the source $F$ is assumed to vanish. We refer to that work for additional details of the arguments used.  
	Let 
	\[
	\mathcal{B}_1=C^{2,\alpha}(\p \Omega), \quad \mathcal{B}_2=C^{2,\alpha}(\overline{\Omega}), \quad \mathcal{B}_3=C^{\alpha}(\overline{\Omega})\times C^{2,\alpha}(\p \Omega)
	\]
	and assume that $u_0$ solves \eqref{solvability 0}. 
	Consider the map 
    \begin{align*}
    		\Psi:\mathcal{B}_1\times \mathcal{B}_2 &\to \mathcal{B}_3, \\
    	       (f,u) &\mapsto \LC \Delta u +a(x,u)-F, u|_{\p \Omega}-\LC f_0+f\RC  \RC.
    \end{align*}
	Similar to \cite[Section 2]{LLLS2019nonlinear}, one can show that the map $u \mapsto a(x,u)$ is a $C^{\infty}$ map from $C^{2,\alpha}(\overline{\Omega}) \to C^{2,\alpha}(\overline{\Omega})$. 
	
	Notice that $ \Psi(0,u_0)  =(0,0)$, where $u_0\in C^{2,\alpha}(\overline{\Omega})$ is a solution to \eqref{solvability 0}. The first linearization of $\Psi=\Psi(f,u)$ at $(0,u_0)$ in the variable $u$ is 
	\[
	\left. D_u \Psi \right|_{(0,u_0)}(v) = \LC \Delta v + \p_z a(x,u_0) v,v|_{\p \Omega}\RC.
	\]
	This is a homeomorphism $ \mathcal{B}_2\to  \mathcal{B}_3$ by the condition \eqref{eigenvalue}, which is guaranteed by well-posedness and Schauder estimates for linear second order elliptic equations.

	Using the implicit function theorem in Banach spaces~\cite[Theorem 10.6 and Remark 10.5]{renardy2006introduction} yields that there is $\delta>0$ and an open ball $\mathcal{N}_\delta\subset C^{2,\alpha}(\p \Omega)$ and a $C^{\infty}$ map $\mathcal{S}: \mathcal{N}_{\delta} \to \mathcal{B}_2$ such that whenever $\norm{f}_{C^{2,\alpha}(\p \Omega)}\leq \delta$ we have 
	\[
	\Psi(f,\mathcal{S}(f))=(0,0).
	\]
	Since $\mathcal{S}$ is smooth and $\mathcal{S}(0) = u_0$, the solution $u =\mathcal{S}(f)$ satisfies 
	\[
	\norm{u}_{C^{2,\alpha}(\overline{\Omega})}\leq C \norm{f}_{C^{2,\alpha}(\p \Omega)}.
	\]
	Moreover, by the uniqueness statement of the implicit function theorem, by redefining $\delta>0$ to be smaller if necessary,  $u=\mathcal{S}(f)$ is the only solution to $\Psi(f,u)=(0,0)$ whenever $\norm{f}_{C^{2,\alpha}(\p \Omega)}\leq \delta$ and 
	$$
	\norm{u}_{C^{2,\alpha}(\overline{\Omega})}\leq C.
	$$
	As in \cite{LLLS2019nonlinear}, one can check that the solution operator $\mathcal{S}: \mathcal{N}_{\delta} \to C^{2,\alpha}(\overline{\Omega})$ is a $C^{\infty}$ map in the Fr\'echet sense. Since the normal derivative is a linear map $C^{2,\alpha}(\overline{\Omega}) \to C^{1,\alpha}(\p \Omega)$, then  $\Lambda$ is also a well defined $C^{\infty}$ map $\mathcal{N}_{\delta} \to C^{1,\alpha}(\p \Omega)$.
	\end{proof}

We remark that if $a(x,z)$ satisfies
\[
 \p_za(x,z)\leq 0
\]
for all $x\in \Omega$ and $z\in \R$,  
then the conditions of Theorem \ref{Thm:wellposedness_and_expansion} are satisfied by \cite[Theorem 15.12]{gilbarg2015elliptic}. Next we give an example of well-posedness in the two-dimensional case, where monotonicity method work well due to Sobolev embedding.

\begin{example}
	In the two-dimensional case, let  $\Omega$ be a bounded domain with $C^\infty$-smooth boundary $\p \Omega$. We consider the semilinear equation 
	\begin{align}\label{cubic nonlinear equ}
		\begin{cases}
			-\Delta u + a^{(3)}u^3 =F  &\text{ in }\Omega, \\
			u=0 &\text{ on }\p \Omega,
		\end{cases}
	\end{align}
	where $c_0 \leq a^{(3)}\in C^\infty(\overline{\Omega})$, for some constant $c_0>0$. Given $F\in H^{-1}(\Omega)$, there exists a unique solution $u_F\in H^1(\Omega)$ solving \eqref{cubic nonlinear equ}. 

	The proof is by the monotone operator method, which works well in dimension two. Let us multiply \eqref{cubic nonlinear equ} by a test function $\varphi \in H^1_0(\Omega)$. Then an integration by parts yields 
	\begin{align*}
		\int_{\Omega} \nabla u \cdot \nabla \varphi \, dx + \int_{\Omega} a^{(3)}u^3 \varphi \, dx =\int_{\Omega} F\varphi \, dx.
	\end{align*}
	Let $\mathbf{T}:H^1_0(\Omega)\to H^{-1}(\Omega)$ be the operator given by 
	\begin{align*}
		\langle  \mathbf{T}u, \varphi   \rangle =	\int_{\Omega} \nabla u \cdot \nabla \varphi \, dx + \int_{\Omega} a^{(3)} u^3 \varphi \, dx , \text{ for any }\varphi \in H^1_0(\Omega).
	\end{align*}
	It is not hard to see that $\mathbf{T}u-F$ is the Frech\'et derivative of the energy functional 
	\begin{align*}
		\mathbf{E}(u)=\frac{1}{2}\int_{\Omega} |\nabla u|^2 \, dx +\frac{1}{4}\int_\Omega a^{(3)} u^4 \, dx-\int_{\Omega}Fu \, dx.
	\end{align*}
	 Since $\Omega\subset \R^2$, the Sobolev space $H^1(\Omega)$ embeds in $L^6(\Omega)$. Using this fact, one can show that the operator $\mathbf{T}$ is bounded, strictly monotone and coercive. Then by applying the classical energy method, the functional $\mathbf{E}$ is coercive and weakly lower semicontinuous on $H^1_0(\Omega)$ (for example, see \cite[Theorem 26.11]{fucik2014nonlinear}). Therefore, $\mathbf{E}$ is bounded from below and attains its infimum at some function $u\in H^1_0(\Omega)$, so that $u$ is a solution of \eqref{cubic nonlinear equ}. The uniqueness of $u$ is a direct result of the strict monotonicity of $\mathbf{T}$. We refer to  \cite[Theorem 3.1]{beretta2022nonlinear} for more details about this argument.
\end{example}

Under the assumptions of Theorem \ref{Thm:wellposedness_and_expansion} above, the boundary value problem \eqref{main equation} is well-posed in the following sense: There is $f_0\in C^{2,\alpha}(\p \Omega)$ and  $\delta>0$ such that for each $f\in f_0+\mathcal{N}_\delta$ there exists a solution $u_f$ to \eqref{main equation} with $u_f|_{\p\Omega}=f$. The solution $u_f$ is unique on a fixed neighborhood of $u_{0}\in C^{2,\alpha}(\Omega)$, where $u_0$ solves \eqref{main equation} with boundary value $f_0$. In this case the corresponding DN map $f_0+\mathcal{N}_\delta\to C^{1,\alpha}(\p \Omega)$ defined by the assignment $f\mapsto \left. \p_\nu u_f \right|_{\p \Omega}$ is well-defined and $C^\infty$ smooth in the Fr\'echet sense.

Before ending this section, we give a well-posedness result in the case when the Dirichlet data and the source $F$ are both  sufficiently small. We record it to provide an example where the DN map is always defined. Let 
\[
\mathcal{A}_\vareps:=\left\{ F\in C^{2,\alpha}(\overline{\Omega}): \, \norm{F}_{C^\alpha(\overline{\Omega})}\leq \varepsilon \right\}.
\]
We have following result, whose the proof is placed in the Appendix \ref{Appendix}.

\begin{prop}\label{Prop: well-posed samll inputs}
	Let $\Omega\subset \R^n$ be a bounded domain with $C^\infty$ boundary $\p \Omega$ and $n\geq 2$. Assume that $a(x,0)=0$. There are $C>0$, $\varepsilon>0$ and $\delta>0$ such that for any $F\in \mathcal{A}_\eps$ and $f\in \mathcal{N}_\delta$, then there is a unique  solution $u\in C^{2,\alpha}(\overline{\Omega})$ of 
	\begin{align}\label{solvability in prop}
		\begin{cases}
			\Delta u+a(x,u)=F & \text{ in } \Omega,\\
			u= f & \text{ on } \p \Omega,
		\end{cases}
	\end{align}
	within the class $\left\{ w \in C^{2,\alpha}(\overline{\Omega}) :\, \norm{w}_{C^{2,\alpha}(\overline{\Omega})} \leq C(\vareps+\delta) \right\}$.	Moreover, there is a $C^{\infty}$ Fr\'echet differentiable map 
	\begin{align*}
				\mathcal{S}: \, \mathcal{A}_\vareps\times \mathcal{N}_{\delta}\to C^{2,\alpha}(\overline{\Omega}), \quad   (F,f) \mapsto u.
		\end{align*}
	In particular, for a fixed $F\in \mathcal{A}_\vareps$, the map 
	\begin{align*}
			\Lambda_F: \, \mathcal{N}_{\delta} &\to C^{1,\alpha}(\p \Omega), \quad  f \mapsto \left. \p_{\nu} u \right|_{\p \Omega}
	\end{align*}
    is also $C^{\infty}$ Fr\'echet differentiable.
\end{prop}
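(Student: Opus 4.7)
My approach would be to apply the implicit function theorem in Banach spaces, treating both $F$ and $f$ as parameters and extending the strategy used in the proof of Theorem \ref{Thm:wellposedness_and_expansion}. Specifically, I would define
\[
\Psi : C^{2,\alpha}(\overline{\Omega}) \times C^{\alpha}(\overline{\Omega}) \times C^{2,\alpha}(\p \Omega) \to C^{\alpha}(\overline{\Omega}) \times C^{2,\alpha}(\p \Omega),
\]
\[
\Psi(u, F, f) := \bigl(\Delta u + a(x, u) - F,\; u|_{\p\Omega} - f\bigr).
\]
The hypothesis $a(x,0)=0$ gives $\Psi(0,0,0)=(0,0)$, providing the natural base point. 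As in the proof of Theorem \ref{Thm:wellposedness_and_expansion}, the Nemitski operator $u \mapsto a(\cdot,u)$ is $C^\infty$ between the relevant H\"older spaces since $a$ is $C^\infty$ in the $z$-variable, so $\Psi$ is a smooth map of Banach spaces.

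The key step, and the main obstacle I would expect, is to verify that the linearization in the $u$-slot at the base point,
\[
D_u \Psi|_{(0, 0, 0)}(v) = \bigl(\Delta v + \p_z a(x, 0)\, v,\; v|_{\p\Omega}\bigr),
\]
is a topological isomorphism from $C^{2,\alpha}(\overline{\Omega})$ onto $C^{\alpha}(\overline{\Omega}) \times C^{2,\alpha}(\p \Omega)$. By Schauder theory and the Fredholm alternative, this reduces to the spectral requirement that $0$ is not a Dirichlet eigenvalue of $\Delta + \p_z a(x, 0)$ in $\Omega$, i.e.\ the analogue of \eqref{eigenvalue} evaluated at the zero solution. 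This condition is implicit in the intended applications (the exponential and sine-Gordon nonlinearities in Corollaries \ref{Cor: Exponential} and \ref{Cor:sine-Gordon}, where $\p_z a(x,0)$ has a controlled form), and will be invoked as a background assumption.

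Once invertibility of $D_u \Psi|_{(0, 0, 0)}$ is established, the implicit function theorem in Banach spaces (cf.\ \cite[Theorem 10.6]{renardy2006introduction}) furnishes constants $\varepsilon, \delta > 0$ and a $C^\infty$ Fr\'echet-smooth solution operator
\[
\mathcal{S} : \mathcal{A}_\varepsilon \times \mathcal{N}_\delta \to C^{2,\alpha}(\overline{\Omega}), \qquad \mathcal{S}(0, 0) = 0, \quad \Psi(\mathcal{S}(F, f), F, f) = (0, 0).
\]
The Lipschitz continuity of $\mathcal{S}$ at the origin yields the quantitative bound $\|\mathcal{S}(F, f)\|_{C^{2,\alpha}} \leq C(\varepsilon + \delta)$. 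Shrinking $\varepsilon, \delta$ if necessary, the uniqueness clause of the IFT delivers uniqueness of the solution inside the stated class $\{w : \|w\|_{C^{2,\alpha}} \leq C(\varepsilon+\delta)\}$. Finally, for each fixed $F \in \mathcal{A}_\varepsilon$, the DN map $\Lambda_F(f) = \p_\nu \mathcal{S}(F, f)|_{\p\Omega}$ is the composition of $\mathcal{S}(F, \cdot)$ with the continuous linear trace $\p_\nu : C^{2,\alpha}(\overline{\Omega}) \to C^{1,\alpha}(\p\Omega)$, and hence inherits $C^\infty$ Fr\'echet differentiability in $f$.
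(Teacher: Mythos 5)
Your proposal is correct and follows essentially the same route as the paper's own proof in Appendix A: the same map $\Psi$, the same base point $(0,0,0)$, the same linearization in the $u$-slot, and the same application of the implicit function theorem in Banach spaces, followed by the same composition argument for $\Lambda_F$. You also correctly flag that invertibility of the linearization requires $0$ not to be a Dirichlet eigenvalue of $\Delta + \p_z a(x,0)$, a condition the paper's proof likewise invokes without listing it among the hypotheses of the proposition.
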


\section{Uniqueness for polynomial nonlinearities up to gauge invariances}\label{Sec 3}	
\subsection{Quadratic nonlinearity}
In the introduction we showed that the inverse source problem for 
\[
 \Delta u + a(x,u)=F,
\]
where $a(x,u)$ is quadratic,
$$
a(x,u(x))=a^{(1)}(x)u(x) +a^{(2)}(x)u^2(x),
$$ 
has a gauge invariance given by the gauge conditions \eqref{gauge_intro}. We show next that these gauge conditions are the only obstruction to uniqueness in the inverse source problem for quadratic nonlinearities. This is Theorem \ref{Thm: gauge with quadratic}.

\medskip

For the quadratic nonlinearity we consider Dirichlet data of the form
\begin{align}\label{Dirichlet data for HOL}
	f:=f(x;\eps_1,\eps_2): =f_0(x)+\eps_1 f_1(x) +\eps_2 f_2(x)  \quad  x\in  \p \Omega,
\end{align}
where where $f_0,f_1, f_2 \in C^{2,\alpha}(\p \Omega)$, and $\eps_1$ and $\eps_2$ are small real parameters. 

\begin{proof}[Proof of Theorem \ref{Thm: gauge with quadratic}]
	By assumption there is $\mathcal{N}\subset C^{2,\alpha}(\p \Omega)$ such that
	\[
	 \Lambda_{a_1,F_1}(f)=\Lambda_{a_2,F_2}(f), \quad f\in \mathcal{N}.
	\]
    Let $f_0\in \mathcal{N}$, $f_1, f_2 \in C^{2,\alpha}(\p \Omega)$ and $\eps_1,\eps_2>0$ such that $f_0+\eps_1f_1+\eps_2f_2\in \mathcal{N}$. We apply the higher order linearization method to the equation
	\begin{align}\label{equ: zero order linear}
		\begin{cases}
			\Delta u_j + a_j^{(1)} u_j +a_j^{(2)} u_j^2 =F_j & \text{ in }\Omega ,\\
			u_j=f_0+\eps_1 f_1 +\eps_2 f_2  & \text{ on }\p \Omega.
		\end{cases}
	\end{align}
	We denote $\eps=(\eps_1, \eps_2)$, which especially means that $\eps=0$ is equivalent to $\eps_1 =\eps_2=0$. Below the index $j=1,2$ corresponds to the different sets of coefficients, and an index $\ell=1,2$ to $\eps_\ell$ parameters. Let us denote by $u_j^{(0)}$ the solution to 
	\begin{align}\label{equ: zero order linear zero}
		\begin{cases}
			\Delta u_j^{(0)} + a_j^{(1)} u_j^{(0)} + a_j^{(2)}\LC u_j^{(0)}\RC^2 =F & \text{ in }\Omega ,\\
			u_j^{(0)}=f_0  & \text{ on }\p \Omega.
		\end{cases}
	\end{align}

	With the well-posedness holding on a neighborhood $\mathcal{N}$ of $f_0$, see Theorem \ref{Thm:wellposedness_and_expansion}, we can differentiate \eqref{equ: zero order linear} with respect to $\eps_\ell$, for $\ell=1,2$. We obtain
	\begin{align}\label{equ: first order linear zero}
		\begin{cases}
			\LC \Delta  + a_j^{(1)}+2a_j^{(2)} u_j^{(0)} \RC v_j^{(\ell)} =0 & \text{ in }\Omega,\\
			v_j^{(\ell)} =f_\ell & \text{ on }\p \Omega,
		\end{cases}
	\end{align}
	where 
	\[
	v_j^{(\ell)}=\left. \p_{\eps_\ell} \right|_{\eps=0}u_j,
	\]
	for $j,\ell=1,2$.  It also follows from Theorem \ref{Thm:wellposedness_and_expansion} that we know the DN maps of the equation \eqref{equ: first order linear zero} for $j=1$ and $j=2$ agree. Thus, by the global uniqueness result for linear inverse boundary value problems (see e.g. \cite[Proposition 2.1]{LLLS2019partial} or \cite{sylvester1987global} for $n \geq 3$ and  \cite{bukhgeim2008recovering, blasten2017singular} for $n=2$),
	we have 
	\begin{align}\label{uniqueness of 1st coef}
	Q:=a_1^{(1)} +2a_1^{(2)} u_1^{(0)} = a_2^{(1)} +2a_2^{(2)} u_2^{(0)} \text{ in }\Omega.
	\end{align}
	It then follows by uniqueness of solutions to the Dirichlet problem \eqref{equ: zero order linear zero} that 
	\[
	v^{(\ell)}:=v_1^{(\ell)}=v_2^{(\ell)} \text{ in }\Omega,
	\]
	for $\ell=1,2$.

	We next derive the equation for the second order linearization of \eqref{equ: zero order linear} at $u_j^{(0)}$. For $j=1,2$, a straightforward computation shows that 
	\begin{align}\label{equ: second order linear zero}
		\begin{cases}
			\LC \Delta +a_j^{(1)}+2a_j^{(2)}u_j^{(0)} \RC w_j + 2a_j^{(2)}v^{(1)}v^{(2)}=0 & \text{ in }\Omega,\\
			w_j=0 &\text{ on }\p \Omega,
		\end{cases}
	\end{align}
	where 
	$$
	w_j=\left.\p^2 _{\eps_1 \eps_2} \right|_{\eps=0}u_j.
	$$
	We show next that $a_1^{(2)}=a_2^{(2)}$ in $\Omega$. 
	For that, let us consider $\mathbf{v}^{(\ell)}$ to be the solution of 
	\begin{align}\label{bold v in quadratic}
		\begin{cases}
			\LC \Delta +Q \RC \mathbf{v}^{(\ell)}=0 &\text{ in }\Omega,\\
			\mathbf{v}^{(\ell)}= g_\ell &\text{ on }\p \Omega,
		\end{cases}
	\end{align}
	where $Q$ is given in \eqref{uniqueness of 1st coef} and $g_\ell\in H^{1/2}(\p \Omega)$ will be chosen later for $\ell=1,2$.  We multiply \eqref{bold v in quadratic} by $\mathbf{v}^{(1)}$. Moreover, by using  $\p_{\nu}w_1=\p_\nu w_2$ on $\p \Omega$, integration by parts yields
	\begin{align*}
	0=&	\int_{\p \Omega} \LC\p_{\nu}w_1-\p_\nu w_2 \RC \mathbf{v}^{(1)}\, dS \\
	    =& \int_{\Omega} \Delta \LC w_1-w_2\RC \mathbf{v}^{(1)}\, dx + \int_{\Omega}\nabla \LC w_1-w_2\RC \cdot \nabla \mathbf{v}^{(1)}\, dx\\
	    =& \int_{\Omega} \Delta \LC w_1-w_2\RC \mathbf{v}^{(1)}\, dx + \int_{\p \Omega} \LC w_1-w_2\RC \cdot\p_\nu \mathbf{v}^{(1)}\, dS\\
	    &-\int_{\Omega} \LC w_1-w_2 \RC \Delta \mathbf{v}^{(1)}\, dx \\
		=&\int_{\Omega} \LC a_1^{(2)}-a_2^{(2)} \RC v^{(1)}v^{(2)}\mathbf{v}^{(1)} \, dx.
	\end{align*}
Here we used $w_1-w_2=0$ on $\p \Omega$ and \eqref{equ: second order linear zero} and  \eqref{bold v in quadratic}.
By using that products of pairs of solutions (CGOs) to \eqref{equ: first order linear zero} are dense in $L^1(\Omega)$ for $n\geq 2$,
we can choose $v^{(1)}$ and $v^{(2)}$ so that we obtain
\begin{equation}\label{eq:for_Runge}
\LC a_1^{(2)}-a_2^{(2)} \RC  \mathbf{v}^{(1)}=0 \text{ in } \Omega.
\end{equation}

Next we take also $\mathbf{v}^{(1)}$ as a CGO solution and multiply the above identity by yet another CGO solution $\mathbf{v}^{(2)}$ with $\mathbf{v}^{(2)}|_{\p \Omega}=g_2$, one can integrate the above identity to obtain 
$$
\int_{\Omega}\LC a_1^{(2)}-a_2^{(2)} \RC\mathbf{v}^{(1)}\mathbf{v}^{(2)}\, dx=0.
$$
By applying the density of CGOs again shows that 
\begin{align}\label{uniqueness of quad 2}
	a^{(2)}:=a_1^{(2)}=a_2^{(2)} \text{ in }\Omega.
\end{align}

Let us then define $\psi\in C^2(\overline\Omega)$ as the difference
\begin{align}\label{psi}
  \psi:=u_2^{(0)} -u_1^{(0)} \text{ in }\Omega.
\end{align}
By plugging \eqref{uniqueness of quad 2} into \eqref{uniqueness of 1st coef}, we obtain 
\begin{align}\label{uniqueness of quad 2-1}
	a_1^{(1)}=a_2^{(1)} +2a^{(2)}\LC  u_2^{(0)} -u_1^{(0)}\RC =a_2^{(1)} +2a^{(2)} \psi \text{ in }\Omega.
\end{align}
Moreover, with the relation \eqref{psi} at hand, we calculate
\begin{align}\label{gauge id in pf thm 2}
	\begin{split}
		F_2  = &\Delta  u_2^{(0)} + a_1^{(2)} u_2^{(0)} +  a_2^{(2)}\LC u_2^{(0)}\RC^2 \\
		= &\Delta \LC u_1^{(0)} +\psi  \RC + a_1^{(2)}\LC u_1^{(0)} +\psi  \RC + a_2^{(2)} \LC u_1^{(0)} +\psi  \RC^2  \\
		=& \LC F_1 +a_1^{(2)}\psi +a_2^{(2)}\psi^2 \RC + \LC  a_1^{(2)} -a_1^{(1)}+2a_2^{(2)}\psi   \RC u_1^{(0)}  \\
		&+ \LC a_2^{(2)} -a_2^{(1)} \RC \LC u_1^{(0)}\RC^2.
	\end{split}
\end{align}
Here we also utilized \eqref{equ: zero order linear zero}. By using \eqref{uniqueness of quad 2} and \eqref{uniqueness of quad 2-1}, we see that 
$F_2=F_1 +a_1^{(2)}\psi +a_2^{(2)}\psi^2$ in $\Omega$. Finally, the function $\psi$ of the form \eqref{psi} satisfies $\psi|_{\p \Omega}=\left. \LC u_2^{(0)}-u_1^{(0)}\RC\right|_{\p \Omega}=0$ and  $\left. \p_\nu \psi \right|_{\p \Omega}= \p_\nu \big( u_2^{(0)}-u_1^{(0)}\big) \big|_{\p \Omega}=0$. We have shown
\begin{align}\label{gauge 1_proof}
	\begin{cases}
		a_1^{(2)} =a_2^{(2)}=:a^{(2)} &\\ 
		a_1^{(1)}=a_2^{(1)}+2a^{(2)} \psi &\\ 
		F_1=F_2-\Delta\psi -a_1^{(2)}\psi -a^{(2)}\psi^2& \\
	\end{cases}
\end{align}
as desired.
\end{proof}

\begin{rmk}\label{rem:Runge}
	Note that if the coefficients of quadratic terms vanish, $a_2^{(1)}=a_2^{(2)}=0$ in $\Omega$, then \eqref{gauge 1_proof} describes the gauge symmetry of inverse source problem for linear equation discussed in  Remark \ref{rmk:counterexample}.
	
	We also remark that in the above proof we could have alternatively used Runge approximation argument to show that $a_1^{(2)}=a_2^{(2)}$ after \eqref{eq:for_Runge}. Indeed, if $x_0\in \Omega$, there is by Runge approximation (see e.g. \cite{LLS2020poisson}) a solution $\mathbf{v}^{(1)}$ such that $\mathbf{v}^{(1)}(x_0)\neq 0$. Together with \eqref{eq:for_Runge}, and using the above argument for all $x_0\in \Omega$, shows $a_1^{(2)}=a_2^{(2)}$ in $\Omega$. Runge approximation in similar situations were earlier used in \cite{LLLS2019nonlinear}. 
\end{rmk}

As discussed in the introduction, if the linear term of a semilinear equation $\Delta u + a^{(1)}u +a^{(2)}u^2=F$ is known (i.e., $a^{(1)}$ is known a priori), then the DN map determines the other coefficients of the equation uniquely. This is Corollary \ref{Cor: Unique determination}, which we now prove.
\begin{proof}[Proof of Corollary \ref{Cor: Unique determination}]

 By assumption and Theorem \ref{Thm: gauge with quadratic} 
 $$
 a^{(1)}=a^{(1)}+2a^{(2)}\psi 
 $$
 and
 \[
  F_1=F_2-\Delta\psi -a_1^{(2)}\psi -a^{(2)}\psi^2 
 \]
 hold in $\Omega$ for some gauge function $\psi$. Here $a^{(2)}=a_1^{(2)}=a_2^{(2)}$. Since $a^{(2)}\neq 0$ in $\Omega$ by assumption, the first identity above shows that $\psi=0$ in $\Omega$. Substituting $\psi=0$ to latter identity above shows $F_1=F_2$ in $\Omega$.
\end{proof}

\subsection{Cubic nonlinearity}
We move on to prove our results about cubic nonlinearities. For $j=1,2$, we let  
$$
a_j(x,z)=a_j^{(1)}z +a_j^{(2)}z^2 +a_j^{(3)}z^3,
$$ 
and let us consider the equation
\begin{align}\label{equ formal cubic}
	\Delta  u_j +a_j^{(1)}u_j +a_j^{(2)}u_j^2 +a_j^{(3)}u_j^3=F_j \text{ in }\Omega.
\end{align}
Theorem \ref{Thm: gauge with cubic}, which we prove in this section shows that the inverse source problems of the above equation has uniqueness property for both coefficients and source up to a gauge. 

Before proving Theorem \ref{Thm: gauge with cubic}, let us derive the gauge of the inverse problem. Assume that $u_1$ solves \eqref{equ formal cubic} with boundary value $u_1|_{\p \Omega}=f$. If $\psi\in C^{2}(\overline{\Omega})$, we denote by $a_2^{(1)}$, $a_2^{(2)}$, $a_3^{(2)}$ and $F_2$ another set of coefficients and a source, which may depend on $\psi$. If we denote $u_2=u_1+\psi$, then we have the chain of equivalences 
\begin{align*}
	&\Delta u_2+ a_2^{(1)}u_2+ a_2^{(2)} \LC u_2 \RC^2 +a_2^{(3	)}\LC u_2 \RC^3= F_2 \\ 
	\iff & \Delta \LC u_1 +\psi \RC  + a_2^{(1)} \LC u_1+\psi \RC + a_2^{(2)} \LC u_1 +\psi \RC^2  +a_2^{(3)}\LC u_1+\psi \RC^3= F_2  \\
	\iff & \Delta u_1 +\Delta \psi +a_2^{(1)} u_1 +a_2^{(1)}\psi +a_2^{(2)}\LC u_1 \RC^2 +2a_2^{(2)}\psi u_1 +a_2^{(2)}\psi^2 \\
	& \qquad+a_2^{(3)}\LC u_1^3 +3u_1^2 \psi+3u_1\psi^2 +\psi^3 \RC =F_2, 
\end{align*}
which holds in $\Omega$. 
By using $\Delta u_1 =-a_1^{(1)}u_1-a_1^{(2)}\LC u_1\RC^2 -a_1^{(3)}\LC u_1\RC^3 +F_1$ in $\Omega$ and equating the powers of $u$ gives the following system 
\begin{align}\label{gauge cubic}
	\begin{cases}
		F_1= F_2 -\Delta \psi- a_2^{(1)} \psi-a_2^{(2)} \psi^2-a_2^{(3)}\psi^3  \\ 
		a_1^{(1)}= a_2^{(1)}  + 2 a_2^{(2)} \psi +3a_2^{(3)}\psi^2  \\
		a_1^{(2)}=a_2^{(2)} +3a_2^{(3)}\psi & \\
		a_1^{(3)}=a_2^{(3)}.  
	\end{cases}
\end{align}
The above system of equations describes the gauge invariance for the inverse source problem for cubic nonlinearity. If $\psi|_{\Omega}=\p_\nu \psi|_{\p \Omega}=0$, the above computation shows that corresponding DN maps $\Lambda_{a_1,F_1}$ and $\Lambda_{a_2,F_2}$ are the same. It is impossible to uniquely determine the coefficients and sources from the DN map at the same time. There is a gauge symmetry given by \eqref{gauge cubic}.

We next prove Theorem \ref{Thm: gauge with cubic}, which states that the DN map determines the coefficients and source up to the gauge symmetry \eqref{gauge cubic}.

\begin{proof}[Proof of Theorem \ref{Thm: gauge with cubic}]
	Let us consider the Dirichlet data 
	\[
	f=f(x;\eps)=f_0+\eps_1 f_1 +\eps_2f_2 +\eps_3 f_3 \quad  \text{ on }\quad \p \Omega,
	\]
	where the parameters $\eps_\ell$ are real numbers, $f_0\in \mathcal{N}$ and $f_\ell\in C^{2,\alpha}(\p \Omega)$, for $\ell =1,2,3$. By assumption $\Lambda_{a_1,F_1}(f)=\Lambda_{a_2,F_2}(f)$ if the parameters $\eps_\ell$ are small enough. We denote $\eps=(\eps_1, \eps_2,\eps_3)$. 
	
	Let us denote by $u_j^{(0)}$ the solution to 
	\begin{align*}
 		\begin{cases}
 			\Delta u_j^{(0)}+ a_j(x,u_j^{(0)}) = F_j & \text{ in }\Omega,\\
 			u_j^{(0)}=f_0 &\text{ on }\p \Omega.
 		\end{cases}
 	\end{align*}
	We linearize 
	\begin{align*}
 		\begin{cases}
 			\Delta u_j+ a_j(x,u_j) = F_j & \text{ in }\Omega,\\
 			u_j=f_0+\eps_1 f_1 +\eps_2f_2 +\eps_3 f_3 &\text{ on }\p \Omega.
 		\end{cases}
 	\end{align*}
	at the solution corresponding to boundary value $f_0$ for $j=1,2$. The first linearization at $f_0$ is 
\begin{equation}\label{1st_in_cubic}
	\begin{cases}
		\LC \Delta + a_j^{(1)}+2a_j^{(2)} u_j^{(0)} +3 a_j^{(3)}\LC u_j^{(0)} \RC ^2\RC v_j^{(\ell)} =0 &\text{ in }\Omega, \\
		v_j^{(\ell)}=f_\ell &\text{ on }\p \Omega,
	\end{cases}
\end{equation}
  where $v_j^{(\ell)}:=\left. \p_{\eps_\ell}u_j \right|_{\eps=0}$ in $\Omega$.  By Theorem \ref{Thm:wellposedness_and_expansion}, we know that the DN maps of \eqref{1st_in_cubic} for $j=1$ and $j=2$ agree.  By the global uniqueness result for the Calder\'on problem for linear equations we have 
\begin{align}\label{Q in cubic}
	Q:=a_1^{(1)}+2a_1^{(2)}u_1^{(0)} +3 a_1^{(3)} \LC u_1^{(0)}\RC^2=a_2^{(1)}+2a_2^{(2)}u_2^{(0)} +3 a_2^{(3)} \LC u_2^{(0)}\RC^2 \text{ in }\Omega,
\end{align}
and by the uniqueness of solutions to the Dirichlet problem \eqref{1st_in_cubic} it follows that
\begin{align}\label{v ell in cubic}
	v^{(\ell)}:=v_1^{(\ell)}=v_2^{(\ell)} \text{ in }\Omega,
\end{align}
for $\ell =1,2,3$. 

The second linearization reads
 \begin{align}\label{2nd in cubic}
	\begin{cases}
		\LC \Delta + Q\RC w_j^{(k\ell)}   + 2\LC a_j^{(2)} +3 a_j^{(3)}u_j^{(0)} \RC v^{(k)}v^{(\ell)} =0 &\text{ in }\Omega,\\
		w_j^{(k\ell)}=0 &\text{ on }\p \Omega,
	\end{cases}
\end{align}
where $w_j^{(k\ell)}=\left. \p^2_{\eps_k \eps_\ell}u_j \right|_{\eps=0}$ for $k,\ell \in \{1,2,3\}$ and $j=1,2$. Similar to the proof of Theorem \ref{Thm: gauge with quadratic}, multiplying \eqref{2nd in cubic} by the function $\mathbf{v}$ that solves
\begin{align}\label{auxil 2nd in cubic}
	\begin{cases}
			\LC \Delta + Q\RC  \mathbf{v}=0 &\text{ in }\Omega, \\
			\mathbf{v}=\mathbf{g}&\text{ on }\p \Omega,
	\end{cases}
\end{align}
where $\mathbf{g}\in H^{1/2}(\p \Omega)$ is a function to be chosen later. 
Multiplying \eqref{2nd in cubic} by the solution $\mathbf{v}$ and integrating by parts show that 
\begin{align}\label{2nd inte id in cubic}
	\int_\Omega \left[ \LC a_1^{(2)}+3a_1^{(3)}  u_1^{(0)} \RC -\LC a_2^{(2)}+3a_2^{(3)} u_2^{(0)} \RC \right] v^{(k)}v^{(\ell)}\mathbf{v}\,  dx=0,
\end{align} 
for $k,\ell =1,2,3$.
Applying an additional density argument as in the proof of Theorem \ref{Thm: gauge with quadratic} (or the one described in Remark \ref{rem:Runge}), one obtains
\begin{align}\label{2nd lin id in cubic}
	R:=a_1^{(2)}+3a_1^{(3)} u_1^{(0)}  =a_2^{(2)}+3a_2^{(3)}  u_2^{(0)}  \text{ in }\Omega.
\end{align}
The uniqueness of solutions to Dirichlet problem of \eqref{2nd in cubic} and \eqref{2nd lin id in cubic} imply 
\[
w^{(k\ell)}:=w_1^{(k\ell)}=w_2^{(k\ell)} \text{ in }\Omega,
\]
for any $k, \ell \in \{1,2,3\}$.

Now, a computation shows that the third linearized equation is 
\begin{align}\label{3rd in cubic}
	\begin{cases}
		\LC \Delta +Q\RC w_j^{(123)} +2R \LC w^{(12)}v^{(3)}+w^{(23)}v^{(1)}+w^{(13)}v^{(2)}\RC \\
		\qquad \qquad \qquad \quad +6a_j^{(3)} v^{(1)}v^{(2)}v^{(3)}=0 &\text{ in }\Omega, \\
		w^{(123)}=0 &\text{ on }\p \Omega,
	\end{cases}
\end{align}
where $R$ is the function given by \eqref{2nd lin id in cubic}.
Multiplying \eqref{3rd in cubic} against the solution $\mathbf{v}$ of \eqref{auxil 2nd in cubic} and integrating by parts produces the identity
\begin{align*}
	\int_{\Omega} \LC a_1^{(3)}-a_2^{(3)} \RC v^{(1)}v^{(2)}v^{(3)}\mathbf{v}\, dx=0.
\end{align*}
By choosing $v^{(\ell)}$ ($\ell=1,2,3$) and $\mathbf{v}$ to be suitable CGO solutions, we conclude via the above integral identity  
\begin{align}\label{unique a_3 in cubic}
	a^{(3)}:=a_1^{(3)}=a_2^{(3)} \text{ in }\Omega, 
\end{align} 
which proves the first relation in \eqref{gauge cubic thm}.

Let us define $\psi\in C^2(\overline\Omega)$ by 
\begin{align}\label{psi in cubic}
	\psi=u_2^{(0)} -u_1^{(0)} \text{ in }\Omega.
\end{align}
Then, the identity \eqref{2nd lin id in cubic} is equivalent to 
\begin{align}\label{unique gange 2nd in cubic}
	a_1^{(2)}=a_2^{(2)} + 3a^{(3)} \LC u_2^{(0)}-u_1^{(0)} \RC = a_2^{(2)} + 3a^{(3)} \psi, 
\end{align}
where we utilized \eqref{unique a_3 in cubic} and \eqref{psi in cubic}. This shows the second identity in \eqref{gauge cubic thm}.
By plugging \eqref{unique gange 2nd in cubic} into \eqref{Q in cubic}, direct computations yield
\begin{align*}
	a_1^{(1)}=&a_2^{(1)}+2a_2^{(2)}u_2^{(0)} +3 a^{(3)} \LC u_2^{(0)}\RC^2-2a_1^{(2)}u_1^{(0)} -3 a^{(3)} \LC u_1^{(0)}\RC^2 \\
	=& a_2^{(1)}+2a_2^{(2)}u_2^{(0)} +3 a^{(3)} \LC u_2^{(0)}\RC^2-2a_2^{(2)}u_1^{(0)}-6a^{(3)}\psi  u_1^{(0)}-3 a^{(3)} \LC u_1^{(0)}\RC^2 \\
	=& a_2^{(1)} +2a_2^{(2)} \psi +3a^{(3)}  \psi^2,
\end{align*}
which proves the third identity in \eqref{gauge cubic thm}. Finally, by inserting \eqref{psi in cubic} into the original nonlinear equation \eqref{equation in thm cubic}, and equating the powers of $u_2^{(0)}$, yield the last identity in \eqref{gauge cubic thm} as desired. This completes the proof.
\end{proof}

\subsection{Polynomial and general nonlinearity}
 
In order to prove Theorem \ref{Thm: gauge polynomial}, where the nonlinearity is a general polynomial, it is convenient to prove Theorem \ref{Thm: general nonlinearity} about general nonlinearities first.

 \begin{proof}[Proof of Theorem \ref{Thm: general nonlinearity}]
 	Let $N\in \N$. By using the higher order linearization method, let us take the Dirichlet data to be of the form 
 	\[
 	f(x)=\sum_{\ell=1}^N \eps_\ell f_\ell (x), \quad x \text{ in }\p \Omega,
 	\]
 	where $\eps_\ell$ are parameters such that $|\eps_\ell|$ are sufficiently small, and each $f_\ell \in C^{2,\alpha}(\p \Omega)$, for $\ell=1,\ldots, N$.
 	We first linearize the equation \eqref{semilinear ellip index} around the solution $u_j^{(0)}$, so that we can have 
 	\begin{align}\label{1st lin gener}
 		\begin{cases}
 			\LC \Delta + \p_za_j(x,u_j^{(0)}) \RC v_j^{(\ell)} = 0 &\text{ in }\Omega, \\
 			v_j^{(\ell)}=f_\ell &\text{ on }\p \Omega
 		\end{cases}
 	\end{align}
 	for $j=1,2$, and $\ell =1,\ldots,N$. The uniqueness result for the inverse problem for the linear Shr\"odinger equation yields again that 
 	\[
 	\p_z a_1(x,u_1^{(0)})=\p_z a_2(x,u_2^{(0)}) \text{ in }\Omega.
 	\]
 	Moreover, via the uniqueness of solutions, we have $v^{(\ell)}=v_1^{(\ell)}=v_2^{(\ell)}$ in $\Omega$, for $\ell=1,2,\ldots,N$.

 	To proceed, the second linearized equation can be derived as 
 	\begin{align}\label{2nd lin gener}
 		\begin{cases}
 			\LC \Delta + Q \RC w_j^{(\ell m)} + \p^2_z a_j (x,u_j^{(0)}) v^{(\ell)} v^{(m)} = 0 &\text{ in }\Omega, \\
 			w_j^{(\ell m)}=0 &\text{ on }\p \Omega,
 		\end{cases}
 	\end{align}
 	where $Q:=\p_za_1(x,u_1^{(0)})=\p_za_2(x,u_2^{(0)})$ in $\Omega$, for $\ell,m=1,2,\ldots, N$. Similar as before, consider a solution $\mathbf{v}$ of 
 	\begin{align*}
 		\begin{cases}
 		\LC \Delta + Q \RC   \mathbf{v} =0 &\text{ in }\Omega,\\
 		\mathbf{v}=\mathbf{g} &\text{ on }\p \Omega,
 		\end{cases}
 	\end{align*}
    by multiplying \eqref{2nd lin gener} by the function $\mathbf{v}$, then an integration by parts formula yields that 
    \begin{align*}
    	\int_{\Omega} \LC \p_z^2a_1(x,u_1^{(0)})-\p_z^2a_2(x,u_2^{(0)}) \RC v^{(\ell)} v^{(m)}  \mathbf{v}\, dx=0,
    \end{align*}
which shows $\p_z^2a_1(x,u_1^{(0)})=\p_z^2a_2(x,u_2^{(0)}) $ in $\Omega$ by utilizing preceding arguments.
 	
 	Furthermore, by considering higher order linearized equations and using an induction argument, similar to the ones in the proofs of \cite[Proof of Theorem 1.1]{LLLS2019partial} and \cite[Proof of Theorem 1.3]{KU2019partial}, it is not hard to show that \eqref{same Taylor coef} holds for any $k\in \N$, where $u_j^{(0)}$ are the solutions of \eqref{1st lin gener}, for $j=1,2$. As $N\in \N$ was arbitrary, this completes the proof. 
 	 \end{proof}
 
We now prove Theorem \ref{Thm: gauge polynomial}.

\begin{proof}[Proof of Theorem \ref{Thm: gauge polynomial}]
To prove the theorem, we need to show that there is $\psi\in C^{2,\alpha}(\overline \Omega)$ with $\psi|_{\p \Omega}=\p_\nu \psi|_{\p \Omega}=0$ such that
\begin{align}\label{gauge poly_proof}
		a_1^{(N-k)}=\sum_{m=N-k}^N  \LC \begin{matrix}
			m\\
			N-k\\\end{matrix}  \RC a_2^{(m)}\psi ^{m-N+k} \quad \text{ in }\Omega,
	\end{align}
for $k=1,\ldots, N$. Since $a_1(x,z)$ and $a_2(x,z)$ are both polynomials of order $N$, we have by Theorem \ref{Thm: general nonlinearity}
\[
 a_1^{(N)}(x)=\p_z^Na_1(x,u_1^{(0)})=\p_z^Na_2(x,u_2^{(0)})=a_2^{(N)}(x)
\]
for all  $x\in \Omega$. Here $u_j^{(0)}$, $j=1,2$, is the solution of \eqref{eq poly gauge inva} as $u_j^{(0)} \big|_{\p \Omega}=0$. Thus the claim holds for $k=0$. We prove the claim by induction. For this, let us assume that 
	\eqref{gauge poly_proof} holds for all $k=0,\ldots, L$. It suffices to show that \eqref{gauge poly} holds for $k=L+1$.
	
	Using Theorem \ref{Thm: general nonlinearity} again, we have 
    \begin{align}\label{N-L-1 derivative}
    	\p_z ^{N-(L+1)}a_1(x,u_1^{(0)})=\p _z ^{N-(L+1)}a_2(x,u_2^{(0)}) \quad \text{ in }\Omega.
    \end{align}
    Since $a_j(x,z)$ is a polynomial in $a$, this identity is equivalent to 
	 \begin{align*}
	 	&(N-L-1)! a_1^{(N-L-1)}+(N-L)! a_1^{(N-L)}u_1^{(0)} \\
	 	&\quad + \frac{(N-L+1)!}{2!}a_1^{(N-L+1)} \LC  u_1^{(0)}\RC^2+\cdots +\frac{N!}{(L+1)!}a_1^{(N)}\LC u_1^{(0)}\RC^{L+1} \\
	 	=&(N-L-1)! a_2^{(N-L-1)}+(N-L)! a_2^{(N-L)}u_2^{(0)} \\
	 	&\quad + \frac{(N-L+1)!}{2!}a_2^{(N-L+1)} \LC  u_2^{(0)}\RC^2+\cdots +\frac{N!}{(L+1)!}a_2^{(N)}\LC u_2^{(0)}\RC^{L+1}.
	 \end{align*}
    After diving by $(N-L-1)!$ the above reads
    \begin{align}\label{id poly 1}
    	\begin{split}
    		&	\LC \begin{matrix}
    			N-L-1\\
    			N-L-1\\\end{matrix}  \RC a_1^{(N-L-1)}+\LC \begin{matrix}
    			N-L\\
    			N-L-1\\\end{matrix}  \RC a_1^{(N-L)}u_1^{(0)} \\
    		&\quad + \LC \begin{matrix}
    			N-L+1\\
    			N-L-1\\\end{matrix}  \RC  a_1^{(N-L+1)} \LC  u_1^{(0)}\RC^2+\cdots +\LC \begin{matrix}
    			N\\
    			N-L-1\\\end{matrix}  \RC  a_1^{(N)}\LC u_1^{(0)}\RC^{L+1} \\
    		=&\LC \begin{matrix}
    			N-L-1\\
    			N-L-1\\\end{matrix}  \RC a_2^{(N-L-1)}+\LC \begin{matrix}
    			N-L\\
    			N-L-1\\\end{matrix}  \RC  a_2^{(N-L)}u_2^{(0)} \\
    		&\quad + \LC \begin{matrix}
    			N-L+1\\
    			N-L-1\\\end{matrix}  \RC  a_2^{(N-L+1)} \LC  u_2^{(0)}\RC^2+\cdots +\LC \begin{matrix}
    			N\\
    			N-L-1\\\end{matrix}  \RC  a_2^{(N)}\LC u_2^{(0)}\RC^{L+1}.
    	\end{split}
    \end{align}
We rewrite \eqref{id poly 1} as
\begin{align}\label{id poly 2}
	\begin{split}
	&	a_1^{(N-L-1)} +\sum_{k=0}^{L}  \LC \begin{matrix}
			N-L+k\\
			N-L-1\\\end{matrix}  \RC a_1^{(N-L+k)} \LC u_1^{(0)}\RC^{k+1} \\
		=& a_2^{(N-L-1)} +\sum_{k=0}^{L}  \LC \begin{matrix}
			N-L+k\\
			N-L-1\\\end{matrix}  \RC a_2^{(N-L+k)} \LC u_2^{(0)}\RC^{k+1}.
	\end{split}
\end{align}

We define
\begin{align}\label{psi poly}
	\psi:= u_2^{(0)}-u_1^{(0)}.
\end{align}
Then $\psi\in C^{2,\alpha}(\Omega)$ and $\psi|_{\p \Omega}=\p_\nu \psi|_{\p \Omega}=0$.
%
By using the induction assumption, that \eqref{gauge poly_proof} holds for $k=0,\ldots,L$, we write the identity \eqref{id poly 2} as
\begin{align}\label{id poly 4}
	\begin{split}
		&a_1^{(N-L-1)} \\
		&= a_2^{(N-L-1)} +\sum_{k=0}^L \LC \begin{matrix}
			N-L+k\\
			N-L-1\\\end{matrix}  \RC \left[ a_2^{(N-L+k)} \LC u_2^{(0)}\RC^{k+1} -  a_1^{(N-L+k)} \LC u_1^{(0)}\RC^{k+1} \right] \\
		&=a_2^{(N-L-1)}+\sum_{k=0}^L \LC \begin{matrix}
			N-L+k\\
			N-L-1\\\end{matrix}  \RC \Bigg[ a_2^{(N-L+k)} \LC u_1^{(0)}+\psi \RC^{k+1} \\
		& \qquad\qquad\qquad\qquad\qquad\quad  -  \sum_{m=N-L+k}^N  \LC \begin{matrix}
			m\\
			N-L+k\\\end{matrix}  \RC a_2^{(m)}\psi ^{m-N+L-k}\LC u_1^{(0)}\RC^{k+1} \Bigg] \\
	\end{split}
\end{align}
Here the induction assumption was used in the last equality. By using binomial expansion, the above equality is
\begin{align}\label{id poly 3}
	\begin{split}
		&a_1^{(N-L-1)} \\
		&=a_2^{(N-L-1)}+\sum_{k=0}^L \LC \begin{matrix}
			N-L+k\\
			N-L-1\\\end{matrix}  \RC \Bigg[ a_2^{(N-L+k)} \sum_{\iota =0}^{k+1}\LC \begin{matrix}
			k+1\\
			\iota \\\end{matrix}  \RC  \psi^{\iota}  \LC u_1^{(0)} \RC^{k+1-\iota}    \\
		& \qquad \qquad\qquad\qquad\qquad -  \sum_{m=N-L+k}^N  \LC \begin{matrix}
			m\\
			N-L+k\\\end{matrix}  \RC a_2^{(m)}\psi ^{m-N+L-k}\LC u_1^{(0)}\RC^{k+1} \Bigg]\\
	   &=  a_2^{(N-L-1)}+ S_1 - S_2.
	\end{split}
\end{align}
%
Here we have defined
\begin{align}\label{I-II}
	\begin{split}
		S_1:= &\sum_{k=0}^L \LC \begin{matrix}
			N-L+k\\
			N-L-1\\\end{matrix}  \RC  a_2^{(N-L+k)} \sum_{\iota =0}^{k+1}\LC \begin{matrix}
			k+1\\
			\iota \\\end{matrix}  \RC  \psi^{\iota}  \LC u_1^{(0)} \RC^{k+1-\iota}    ,\\
		S_2:= &  \sum_{k=0}^L \LC \begin{matrix}
			N-L+k\\
			N-L-1\\\end{matrix}  \RC \sum_{m=N-L+k}^N  \LC \begin{matrix}
			m\\
			N-L+k\\\end{matrix}  \RC a_2^{(m)}\psi ^{m-N+L-k}\LC u_1^{(0)}\RC^{k+1}.
	\end{split}
\end{align}

To complete the proof we compare the coefficients of the powers of $u_1^{(0)}$ of $S_1$ and $S_2$. We first observe that in the term $S_1$, the powers of $u_1^{(0)}$ range from $0$ to $L+1$. In the tern $S_2$, the powers of $ u_1^{(0)}$ range from $1$ to $L+1$. We split the remaining proof into two cases according to powers of $u_1^{(0)}$. 

\medskip

\noindent {\bf Case 1:}
\medskip

\noindent Let us consider the coefficients of the terms $\big(u_1^{(0)} \big)^J$, $J=1,\ldots, L+1$, in $S_1$ and $S_2$. We observe that the coefficient of $\big( u_1^{(0)} \big)^J$ in $S_1$ is 
\begin{align}\label{I_J}
	\sum_{k=J-1}^L  \LC \begin{matrix}
		N-L+k\\
		N-L-1\\\end{matrix}  \RC a_2^{(N-L+k)}\LC \begin{matrix}
		k+1\\
		k+1-J \\\end{matrix}  \RC  \psi^{k+1-J} 
\end{align}
Similarly, the coefficient of $\big( u_1^{(0)} \big)^J$ in $S_2$ is  
\begin{align}\label{II_J}
	\begin{split}
		&\LC \begin{matrix}
			N-L+J+1\\
			N-L-1\\\end{matrix}  \RC \sum_{m=N-L+J-1}^N  \LC \begin{matrix}
			m\\
			N-L+J+1\\\end{matrix}  \RC a_2^{(m)}\psi ^{m-N+L-J+1} \\
		=&\LC \begin{matrix}
			N-L+J-1\\
			N-L-1\\\end{matrix}  \RC \sum_{k=J-1}^{L}  \LC \begin{matrix}
			N-L+k\\
			N-L+J-1\\\end{matrix}  \RC  a_2^{(N-L+k)} \psi^{k+1-J}.
	\end{split}
\end{align}
On the other hand, a direct computation shows that 
\begin{align*}
\LC \begin{matrix}
	N-L+k\\
	N-L-1\\\end{matrix}  \RC \LC \begin{matrix}
		k+1\\
		k+1-J \\\end{matrix}  \RC =\LC \begin{matrix}
		N-L+J-1\\
		N-L-1\\\end{matrix}  \RC\LC \begin{matrix}
		N-L+k\\
		N-L+J-1\\\end{matrix}  \RC ,
\end{align*} 
so that \eqref{I_J} and \eqref{II_J} are the same.

\medskip

\noindent{\bf Case 2:} 

\medskip

\noindent The term $S_2$ does not contain the zeroth power of $u_1^{(0)}$. We express $S_1$ as 
\[
S_1:=S_0+\wt S,
\]
where 
\begin{align}\label{I_0-tilde I}
	\begin{split}
		S_0:=&\sum_{k=0}^L  \LC \begin{matrix}
			N-L+k\\
			N-L-1\\\end{matrix}  \RC  a_2^{(N-L+k)} \psi ^{k+1} \\
		\wt S:=&\sum_{k=0}^L \LC \begin{matrix}
			N-L+k\\
			N-L-1\\\end{matrix}  \RC  a_2^{(N-L+k)} \sum_{\iota =0}^{k}\LC \begin{matrix}
			k+1\\
			\iota \\\end{matrix}  \RC  \psi^{\iota}  \LC u_1^{(0)} \RC^{k+1-\iota}.
	\end{split}
\end{align}
By redefining the summation index of $S_0$, we have 
\begin{align}\label{I_0}
	\begin{split}
		S_0=&\sum_{m=N-L}^N  \LC \begin{matrix}
			m\\
			N-L-1\\\end{matrix}  \RC  a_2^{(m)} \psi ^{m-N+L+1} .
	\end{split}
\end{align}
Therefore, by plugging \eqref{I_J}--\eqref{I_0} into \eqref{id poly 4}, we obtain 
\begin{align*}
	a_1^{(N-(L+1))}=&a_2^{(N-(L+1))} +\sum_{m=N-L}^N  \LC \begin{matrix}
		m\\
		N-L-1\\\end{matrix}  \RC  a_2^{(m)} \psi ^{m-N+L+1}  \\
=&\sum_{m=N-(L+1)}^N  \LC \begin{matrix}
	m\\
	N-L-1\\\end{matrix}  \RC  a_2^{(m)} \psi ^{m-N+L+1}.
\end{align*}
This proves the induction step. It remains to prove \eqref{F_rel_poly}.

Recall that the nonlinearity $a_j(x,z)=\sum_{k=1}^N a_j^{(k)}z^k$, for $j=1,2$, then we can write 
$a_1(x,u_1^{(0)})$ in terms of 
\begin{align}\label{a_1 in poly}
	\begin{split}
		a_1(x,u_1^{(0)})=&\sum_{k=0}^{N-1} a_1^{(N-k)} \LC u_1^{(0)} \RC^{N-k}\\
		=&\sum_{k=0}^{N-1}\sum_{m=N-k}^N  \LC \begin{matrix}
			m\\
			N-k\\\end{matrix}  \RC a_2^{(m)}\psi ^{m-N+k}\LC u_1^{(0)} \RC^{N-k}.
	\end{split}
\end{align}
On the other hand, one can also express 
\begin{align}\label{a_2 in poly}
	\begin{split}
		a_2(x,u_2^{(0)})=\sum_{k=1}^N a_2^{(k)}\LC u_2^{(0)}\RC^k =\sum_{k=1}^N a_2^{(k)}  \sum_{m=0}^k\LC \begin{matrix}
			k\\
			m\\\end{matrix}  \RC \LC u_1^{(0)} \RC^m \psi^{k-m},
	\end{split}
\end{align}
where we used \eqref{psi poly} and binomial expansion in the above computation.  Similar to the computations of Case 1 in preceding arguments, by comparing the orders of the homogeneous parts $\big( u_1^{(0)} \big)^{L}$, for $L=1,2,\ldots, N$, a direct computation yields that 
\[
a_2(x,u_2^{(0)})-a_1(x,u_1^{(0)})=\sum_{k=1}^N a_2^{(k)}   \psi^{k}.
\]
Therefore, 
\begin{align*}
	\begin{split}
		F_1 -F_2=\Delta \LC u_1^{(0)} -u_2^{(0)} \RC +a_1(x,u_1^{(0)}) -a_2(x,u_2^{(0)})
		=- \Delta \psi -\sum_{k=1}^N a_2^{(k)}   \psi^{k},
	\end{split}
\end{align*}
which shows \eqref{F_rel_poly}. This proves the assertion.
\end{proof}

With Theorem \ref{Thm: gauge polynomial} at hand, we can prove Corollary \ref{Cor: Unique determination poly} immediately.

\begin{proof}[Proof of Corollary \ref{Cor: Unique determination poly}]
With the identities \eqref{gauge poly} at hand, as $k=1$, we have
	\begin{align*}
	a_1^{(N-1)}=\sum_{m=N-1}^N  \LC \begin{matrix}
		m\\
		N-1\\\end{matrix}  \RC a_2^{(m)}\psi ^{m-N+1} =a_2^{(N-1)} + N a_2^{(N)}\psi  \text{ in }\Omega.
\end{align*}
Since $a_1^{(N)}(x)=a_2^{(N)}(x)\neq 0 $ for all $x\in \Omega$, and $a_1^{(N-1)}=a_2^{(N-1)}$, the preceding equality yields that $\psi=0$ in $\Omega$. Finally, by applying the \eqref{gauge poly} again as $k=N$, one can prove $F_1=F_2$ in $\Omega$, which completes the proof.
\end{proof}

	We next prove that is the  sources $F_1$ and $F_2$ are known in Theorems \ref{Thm: gauge with quadratic}--\ref{Thm: gauge polynomial}, then it is possible to determine the coefficients uniquely. We have the following corollary, which we formulate in terms of the general polynomial nonlinearity. 
	
\begin{cor}
	Let us adopt the notation and assumptions in Theorem \ref{Thm: gauge polynomial}. If $F_1=F_2$ in $\Omega$, then we have 
	\begin{align*}
		a_1^{(k)}=a_2^{(k)} \text{ in }\Omega,
	\end{align*}
for $k=1,2,\ldots ,N$.
\end{cor}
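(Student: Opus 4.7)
The plan is to use the gauge relations provided by Theorem \ref{Thm: gauge polynomial} together with the assumption $F_1=F_2$ to force the gauge function $\psi$ to have zero Cauchy data while solving a homogeneous semilinear equation. A unique continuation argument will then yield $\psi\equiv 0$, and the gauge relations will collapse to the desired equalities.

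First, by Theorem \ref{Thm: gauge polynomial}, there is $\psi\in C^{2,\alpha}(\overline\Omega)$ with $\psi|_{\p\Omega}=\p_\nu\psi|_{\p\Omega}=0$ such that \eqref{gauge poly} and \eqref{F_rel_poly} hold. Substituting the assumption $F_1=F_2$ into \eqref{F_rel_poly} gives
\[
\Delta \psi + \sum_{k=1}^N a_2^{(k)}\psi^k = 0 \quad\text{in }\Omega.
\]
Using that $a_2(x,0)=0$, we factor the nonlinear term as $\sum_{k=1}^N a_2^{(k)}\psi^k = Q(x)\psi$ where $Q(x):=\sum_{k=1}^N a_2^{(k)}(x)\psi(x)^{k-1}\in L^\infty(\Omega)$ (in fact $C^\alpha$). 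Thus $\psi$ solves the linear Schr\"odinger-type equation
\[
\Delta \psi + Q(x)\psi = 0 \quad\text{in }\Omega, \qquad \psi|_{\p\Omega}=\p_\nu\psi|_{\p\Omega}=0.
\]

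Second, I would invoke the standard strong unique continuation principle for second order elliptic equations with bounded potentials (Aronszajn's theorem): a solution with vanishing Cauchy data on an open piece of the boundary must vanish identically. This yields $\psi\equiv 0$ in $\Omega$.

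Finally, I would plug $\psi=0$ into the gauge identities \eqref{gauge poly}. For each $k=1,\ldots,N$, only the $m=N-k$ summand survives, giving
\[
a_1^{(N-k)}=\binom{N-k}{N-k}a_2^{(N-k)}\psi^0 = a_2^{(N-k)} \quad\text{in }\Omega,
\]
and combined with $a_1^{(N)}=a_2^{(N)}$ this proves \eqref{unique of coeff}. The only nontrivial step is the unique continuation application, but this is immediate once the equation is rewritten with the $L^\infty$ potential $Q$, so I do not expect any genuine obstacle.
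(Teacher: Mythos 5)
Your proposal is correct and follows essentially the same route as the paper: the paper also substitutes $F_1=F_2$ into \eqref{F_rel_poly} to get $\Delta\psi+\sum_{k=1}^N a_2^{(k)}\psi^k=0$, deduces the bound $|\Delta\psi|\le C|\psi|$ (equivalent to your factorization $\Delta\psi+Q\psi=0$ with $Q\in L^\infty$), applies unique continuation for differential inequalities with the zero Cauchy data of $\psi$ to conclude $\psi\equiv 0$, and then reads off $a_1^{(k)}=a_2^{(k)}$ from \eqref{gauge poly}. The only cosmetic difference is that you phrase the unique continuation step for a Schr\"odinger equation with bounded potential rather than for a differential inequality; these are interchangeable here.
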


\begin{proof}
	By using  \eqref{F_rel_poly}, we have 
	\begin{align}\label{poly psi}
		\Delta \psi +\sum_{k=1}^N a_2^{(k)}\psi^k =0 \text{ in }\Omega,
	\end{align}
	where $\psi \in C^{2,\alpha}(\overline{\Omega})$ is defined via \eqref{psi poly}, which is a bounded function. Since $a_2^{(k)}\in  C^{\alpha}(\overline{\Omega})$ for $k=1,2,\ldots, N$, \eqref{poly psi} implies that 
	\begin{align*}
	   \begin{cases}
	   		|\Delta \psi |\leq C|\psi| &\text{ in }\Omega,\\
	   		\psi =\p_\nu \psi =0 &\text{ on }\p \Omega,
	   \end{cases}
	\end{align*}
for some constant $C>0$.
Applying the unique continuation for differential inequalities (see e.g.~\cite{JK1985unique}), one obtains that $\psi =0$ in $\Omega$. Finally, combining with the relations \eqref{gauge poly}, we obtain the uniqueness of coefficients. (To easily see how this final argument goes, see the cubic case and \eqref{gauge cubic} first.) 
\end{proof}

\section{Case studies of Theorem \ref{Thm: general nonlinearity}}\label{Sec 4}

In the end of this paper, we study special cases Theorem \ref{Thm: general nonlinearity}, which stated that for general nonlinearities 
\begin{align}\label{same Taylor coef_case_study}
		\p_z^k a_1(x,u_1^{(0)}(x))= \p_z^k a_2(x,u_2^{(0)}(x)), \quad  x\in \Omega, \quad k\in \N.
\end{align}
In general, given only the conditions \eqref{same Taylor coef_case_study}, it is not clear how  explicit relation between the coefficients $(a_1(x,z),F_1(x))$ and $(a_2(x,z),F_2(x))$ in terms of $\psi=u_2^{(0)}-u_1^{(0)}$ one can find. This final section of this paper consider examples where the relation is explicit. 


\subsection{Exponential nonlinearity}


\begin{proof}[Proof of Theorem Corollary \ref{Cor: Exponential}]
 We prove cases 1 and 2 separately:
 
 \smallskip
 
 \noindent \textbf{Case 1.}
 
 \smallskip

 The nonlinearity in this case is $a_j(x,z)=q_j(x)e^z$. Let $u_j^{(0)}$ be the solution to 
 \begin{align}\label{u0 in exponential}
 	\begin{cases}
 		\Delta u_j^{(0)} + q_j(x)e^{u_j^{(0)}}=F_j &\text{ in }\Omega, \\
 		u_j^{(0)}=f_0&\text{ on }\p \Omega,
 	\end{cases}
 \end{align}
for $j=1,2$. Here $f_0\in \mathcal{N}$. Using \eqref{same Taylor coef_case_study} with $k=1$, we have  
 \begin{align}\label{q1u1=q2u2}
 	q_1e^{u_1^{(0)}}=\p_z a_1(x,u_1^{(0)})=\p_z a_2(x,u_2^{(0)})=q_2 e^{u_2^{(0)}} \text{ in }\Omega.
 \end{align}
  On the other hand, by taking $u_2^{(0)}=u_1^{(0)}+\psi$ in $\Omega$, by \eqref{q1u1=q2u2} one has  $q_1e^{u_1^{(0)}}=q_2e^{u_1^{(0)}+\psi}$ which implies $q_1=q_2e^{\psi}$ in $\Omega$.
 Then, by using \eqref{u0 in exponential}, we have 
 \[
F_2-F_1= \Delta \big( u_2^{(0)}-u_1^{(0)} \big)  + q_2e^{u_2^{(0)}} - q_1e^{u_1^{(0)}} =\Delta \psi \text{ in }\Omega,
 \]
 where we have utilized \eqref{q1u1=q2u2}. This shows \eqref{gauge exponential}.
 
 For the converse statement, we note that if
 \[
  q_1=q_2e^{\psi} \text{ and } F_1=F_2-\Delta\psi,
 \]
 and we set $u_2=u_1+\psi$, then
 \begin{align*}
  \Delta u_1+ q_1e^{u_1}=F_1 &\iff \Delta u_2 -\Delta \psi+ q_2e^{\psi}e^{u_2-\psi}=F_2-\Delta\psi \\
  &\iff \Delta u_2 + q_2e^{u_2}=F_2.
 \end{align*}
Since $\psi|_{\p \Omega}=\p_\nu|_{\p \Omega}=0$, we have the converse statement.

  \smallskip
 
 \noindent \textbf{Case 2.}
 
 \smallskip
 
 In this case $a_j(x,z)=q_j(x)ze^z$. Let $u_j^{(0)}$ be the solution of 
 \begin{align}\label{equ u eu 0}
 	\begin{cases}
 		\Delta u_j^{(0)} + q_ju_j^{(0)} e^{u_j^{(0)} } =F_j &\text{ in }\Omega, \\
 		u_j^{(0)} =f_0 &\text{ on }\p \Omega,
 	\end{cases}
 \end{align}
 for $j=1,2$. The condition \eqref{same Taylor coef_case_study} for $k=1$ yields
 \begin{align}\label{uniq fir u eu}
 	Q:=q_1 \LC u_1^{(0)} +1 \RC e^{u_1^{(0)}}=	q_2 \LC u_2^{(0)} +1 \RC e^{u_2^{(0)}}  \text{ in }\Omega,
 \end{align}
 and for $k=2$ it yields 
 \begin{align}\label{uniq sec u eu}
 	q_1 \LC u_1^{(0)} +2 \RC e^{u_1^{(0)}} = q_2 \LC u_2^{(0)} +2 \RC e^{u_2^{(0)}}  \text{ in }\Omega.
 \end{align}
 Combining \eqref{uniq fir u eu} and \eqref{uniq sec u eu}, we obtain 
 \begin{align}\label{uni coe u eu}
 	q_1e^{u_1^{(0)}}=q_2e^{u_2^{(0)}}	\quad \text{ and }\quad  q_1u_1^{(0)}e^{u_1^{(0)}}=q_2u_2^{(0)}e^{u_2^{(0)}} 	 \text{ in }\Omega.
 \end{align}
 By the first identity of \eqref{uni coe u eu}, we have $q_1=q_2e^{u_2^{(0)}-u_1^{(0)}}$ in $\Omega$. The second identity of \eqref{uni coe u eu} shows that $q_2u_1^{(0)}e^{u_2^{(0)}}=q_2u_2^{(0)}e^{u_2^{(0)}}$ in $\Omega$. Since $q_2\neq 0$
 in $\Omega$, we must have $u_1^{(0)}=u_2^{(0)}$ in $\Omega$, which implies that $F_1=F_2$ in $\Omega$, where we utilized the equation \eqref{equ u eu 0}. Moreover, by the first identity of \eqref{uni coe u eu} and $u_1^{(0)}=u_2^{(0)}$ in $\Omega$, we can derive $q_1=q_2$ in $\Omega$. This proves the assertion. 
\end{proof}

If $F_1=F_2$ in $\Omega$ in Corollary \ref{Cor: Exponential}, we have the following uniqueness result regarding the Case 1 in the above corollary.

\begin{cor} Let us assume as in the Case 1 of Corollary \ref{Cor: Exponential} and adopt its notation. If additionally $F_1=F_2$, then
\[
	q_1=q_2 \text{ in } \Omega.
\]
\end{cor}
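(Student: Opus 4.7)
The plan is to invoke directly the gauge relation already established in Case 1 of Corollary \ref{Cor: Exponential}. Under the hypotheses of that corollary, we know there exists $\psi \in C^{2,\alpha}(\overline{\Omega})$ with $\psi|_{\partial\Omega} = \partial_\nu\psi|_{\partial\Omega} = 0$ such that
\[
  q_1 = q_2 e^\psi \quad \text{and} \quad F_1 = F_2 - \Delta\psi \quad \text{in } \Omega.
\]
So the entire task reduces to showing that the gauge function $\psi$ must vanish identically once the additional hypothesis $F_1 = F_2$ is imposed.

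First I would substitute $F_1 = F_2$ into the second identity above to obtain $\Delta\psi = 0$ in $\Omega$. Combined with the boundary condition $\psi|_{\partial\Omega} = 0$, the uniqueness of the Dirichlet problem for the Laplace equation (i.e.~the standard maximum principle for harmonic functions) forces $\psi \equiv 0$ in $\Omega$. Plugging $\psi = 0$ into the first gauge identity yields $q_1 = q_2 e^0 = q_2$ in $\Omega$, which is the desired conclusion.

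There is no real obstacle here; the proof is a one-line consequence of the explicit form of the exponential gauge derived in Corollary \ref{Cor: Exponential}. The only point worth remarking on is that the boundary condition $\partial_\nu \psi|_{\partial\Omega} = 0$ is not needed for this corollary, since already the Dirichlet data $\psi|_{\partial\Omega} = 0$ together with the harmonicity of $\psi$ suffices to conclude $\psi \equiv 0$. This illustrates the general principle that knowing the source a priori breaks the exponential gauge symmetry and restores uniqueness of the potential $q$.
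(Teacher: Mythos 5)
Your proof is correct and follows essentially the same route as the paper: substitute $F_1=F_2$ into the gauge relation to get $\Delta\psi=0$, conclude $\psi\equiv 0$, and deduce $q_1=q_2$. The only (harmless) difference is that the paper invokes unique continuation using the full Cauchy data $\psi|_{\p\Omega}=\p_\nu\psi|_{\p\Omega}=0$, whereas you correctly observe that uniqueness for the Dirichlet problem with $\psi|_{\p\Omega}=0$ alone already suffices.
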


\begin{proof}
	Since the source terms in Corollary \ref{Cor: Exponential} satisfy $F_1=F_2$ in $\Omega$, it follows from \eqref{gauge exponential} that $\Delta \psi=0$ in $\Omega$ with $\psi|_{\p \Omega}=\left. \p_\nu\psi \right|_{\p \Omega}=0$. By using the unique continuation principle, we conclude that 
	$\psi\equiv 0$ in $\Omega$. Therefore, combining with \eqref{q1u1=q2u2}, we must have $q_1=q_2$ in $\Omega$ as desired.
\end{proof}

\subsection{The sine-Gordon equation}
We prove Corollary \ref{Cor:sine-Gordon}.

\begin{proof}[Proof of Corollary \ref{Cor:sine-Gordon}]
	We divide the proof into two steps:
	
	\smallskip
	
	{\it Step 1. Gauge invariance.}
	
	\smallskip
	
	\noindent Let $u_j^{(0)}$ be the solution of 
	\begin{align}\label{sine-Gordon zero}
		\begin{cases}
			\Delta u_j^{(0)} +q_j \sin(u_j^{(0)})=F_j &\text{ in }\Omega, \\
			u_j^{(0)}=f_0&\text{ on }\p \Omega,
		\end{cases}
	\end{align}
 for $j=1,2$ and where $f_0\in \mathcal{N}$. By Theorem \ref{Thm: general nonlinearity}, we have $\p_z^ka_1(x,u_1^{(0)})=\p_z^ka_2(x,u_2^{(0)})$, for $k=1,2$, which implies that 
 \begin{align}\label{equal sine-cosine}
 	\quad q_1 \cos u_1^{(0)}=q_2 \cos u_2^{(0)}  \quad \text{ and } \quad  q_1 \sin u_1^{(0)}=q_2 \sin u_2^{(0)} \text{ in }\Omega.
 \end{align}
By the Euler identity, we have $e^{\mathbf{i}y}=\cos y+\mathbf{i}\sin y$, where $\mathbf{i}=\sqrt{-1}$. Then \eqref{equal sine-cosine} is equivalent to 
\begin{align}\label{euler expression}
	q_1 e^{\mathbf{i}u_1^{(0)}}=q_2e^{\mathbf{i}u_2^{(0)}} \text{ in }\Omega.
\end{align}
By defining $\psi=u_2^{(0)}-u_1^{(0)}$, we have that $\psi\in C^{2,\alpha}(\overline{\Omega})$ and $\psi=\p_\nu\psi=0$ on $\p \Omega$. Via the second identity of \eqref{equal sine-cosine} and \eqref{sine-Gordon zero}, one has
\[
\Delta \psi =\Delta \big( u_2^{(0)}-u_1^{(0)}\big)  =F_2-F_1 \text{ in }\Omega,
\]
and by \eqref{euler expression}, 
\[
q_1e^{\mathbf{i}u_1^{(0)}}=q_2e^{\mathbf{i}(u_1^{(0)}+\psi)} \text{ in }\overline{\Omega}, 
\]
which implies $q_1=q_2e^{\mathbf{i}\psi}$ in $\overline{\Omega}$. Furthermore, since $q_1$ and $q_2$ are real-valued functions and $\psi$ is continuous, we must have either 
$e^{\mathbf{i}\psi}\equiv-1$ or $e^{\mathbf{i}\psi}\equiv 1$ in $\Omega$. Thus
\begin{align}\label{gauge sine pm}
	q_1 =\pm q_2 \text{ in }\overline{\Omega}.
\end{align}
It remains to show that 
\begin{align}\label{psi constant}
	e^{\mathbf{i}\psi}=1 \text{ in }\overline{\Omega}.
\end{align}

	\smallskip

{\it Step 2. Boundary determination.}

\smallskip

\noindent We show by using boundary determination that $\psi\equiv 1$ in $\overline \Omega$. Let $\eps$ be a small real parameter, $g\in C^{2,\alpha}(\p \Omega)$ and $f=f_0+\eps g$. By linearizing \eqref{sine-Gordon thm} around the solution $u_j^{(0)}$ of \eqref{sine-Gordon zero},  one has 
\begin{align}\label{first lin sine-Gordon}
	\begin{cases}
	  \LC \Delta +q_j\cos u_j^{(0)} \RC v_j=0 &\text{ in }\Omega,\\
	  v_j=g&\text{ on }\p \Omega,
	\end{cases}
\end{align}
for $j=1,2$. Now, by applying standard boundary determination  for the linear Schr\"odinger equation \eqref{first lin sine-Gordon}, one can determine that 
\[
q_1 \cos (f_0+\eps g)=q_1 \cos u_1^{(0)}=q_2\cos u_2^{(0)}=q_2 \cos (f_0+\eps g) \quad \text{ on }\p \Omega.
\]
In particular, for $\eps=0$, the above identity shows that 
\[
q_1\cos (f_0)=q_2\cos (f_0) \text{ on }\p \Omega.
\]
If $\cos (f_0)\equiv 0$ on $\p M$, we can slightly perturb $f_0$ so that there is $x_0\in \p \Omega$ with $\cos(f_0(x_0))\neq 0$ and repeat the above argument again. We deduce that $e^{\mathbf{i}\psi(x_0)}=1$, and since $\psi$ is constant, we conclude that
\[
 q_1=q_2 \text{ in } \Omega.
\]
%
This proves the claim.
\end{proof}

\appendix

\section{Proof of Proposition \ref{Prop: well-posed samll inputs}}\label{Appendix}

Let us prove Proposition \ref{Prop: well-posed samll inputs}. The proof is almost identical to the proof of Theorem \ref{Thm:wellposedness_and_expansion}, but Proposition \ref{Prop: well-posed samll inputs} does not exactly follow from Theorem \ref{Thm:wellposedness_and_expansion}. A very similar proof can be found from the work \cite[Section 2]{LLLS2019nonlinear}.

	\begin{proof}[Proof of Proposition \ref{Prop: well-posed samll inputs}]
	Let 
	\[
	\mathcal{B}_1=C^{2,\alpha}(\p \Omega), \quad \mathcal{B}_2=C^{\alpha}(\overline\Omega), \quad \mathcal{B}_3=C^{2,\alpha}(\overline{\Omega}), \quad \mathcal{B}_4=C^{\alpha}(\overline{\Omega})\times C^{2,\alpha}(\p \Omega)
	\]
	and consider the map 
	\begin{align*}
		\Psi:\mathcal{B}_1\times \mathcal{B}_2 \times \mathcal{B}_3 &\to \mathcal{B}_4, \\
		(f,F, u) &\mapsto \LC \Delta u +a(x,u)-F, u|_{\p \Omega}-f   \RC.
	\end{align*}
	Similar to \cite[Section 2]{LLLS2019nonlinear}, one can show that the map $u \mapsto a(x,u)$ is a $C^{\infty}$ map from $C^{2,\alpha}(\overline{\Omega}) \to C^{2,\alpha}(\overline{\Omega})$. 
	
	Notice that $ \Psi(0,0,0)  =(0,0)$, where we have the used condition \eqref{a(x,0)=0}.
	The first linearization of $\Psi=\Psi(f,F,u)$ at $(0,0,0)$ with respect to the variable $u$ is 
	\[
	\left. D_u \Psi \right|_{(0,0,0)}(v) = \LC \Delta v + \p_z a(x,0) v,v|_{\p \Omega}\RC,
	\]
	which is a homeomorphism $ \mathcal{B}_3\to  \mathcal{B}_4$ by the condition 
	\[
	0 \text{ is not a Dirichlet eigenvalue of }\Delta + \p _z a (x,0) \text{ in }\Omega.
	\]
	This is guaranteed by well-posedness and Schauder estimates for the linear second order elliptic equation .

	Now, the implicit function theorem in Banach spaces~\cite[Theorem 10.6 and Remark 10.5]{renardy2006introduction} yields that there are $\vareps,\delta>0$ and a neighborhood $\mathcal{N}_\delta\times \mathcal{A}_\vareps\subset C^{2,\alpha}(\p \Omega)\times C^{\alpha}(\overline\Omega)$ and a $C^{\infty}$ map $\mathcal{S}: \mathcal{N}_\delta\times \mathcal{A}_\vareps \to \mathcal{B}_3$ such that  
	\[
	\Psi(f,F,\mathcal{S}(f,F))=(0,0),
	\]
	whenever $\norm{f}_{C^{2,\alpha}(\p \Omega)}\leq \delta$ and $\norm{F}_{C^{\alpha}(\overline{\Omega})}\leq \vareps$.
	Since $\mathcal{S}$ is smooth and $\mathcal{S}(0,0) = 0$, the solution $u =\mathcal{S}(f,F)$ satisfies 
	\[
	\norm{u}_{C^{2,\alpha}(\overline{\Omega})}\leq C \LC \norm{f}_{C^{2,\alpha}(\p \Omega)}+\norm{F}_{C^{\alpha}(\overline\Omega)}\RC .
	\]
	Furthermore, by the uniqueness statement of the implicit function theorem, $u=\mathcal{S}(f,F)$ is the only solution to $\Psi(f,F,u)=(0,0)$ whenever $\norm{f}_{C^{2,\alpha}(\p \Omega)}+\norm{F}_{C^{\alpha}(\overline\Omega)}\leq \delta+\vareps$, and 
	$$
	\norm{u}_{C^{2,\alpha}(\overline{\Omega})}\leq  C\LC \varepsilon +\delta \RC.
	$$
	This can be achieved by redefining $\vareps,\delta>0$ to be smaller if necessary.
	As in \cite{LLLS2019nonlinear}, one can check that the solution operator $\mathcal{S}: \mathcal{N}_{\delta} \times \mathcal{A}_\vareps \to C^{2,\alpha}(\overline{\Omega})$ is a $C^{\infty}$ map in the Fr\'echet sense. The normal derivative is a linear map $C^{2,\alpha}(\overline{\Omega}) \to C^{1,\alpha}(\p \Omega)$. Thus for a fixed $F\in \mathcal{A}_\vareps$, $\Lambda_F: f\mapsto \p_\nu u_{f,F}$, where $u_{f,F}$ solves $\Delta u_{f,F}+a(x,u_{f,F})=F$ with $u_{f,F}|_{\p \Omega}=f$, is also a well defined $C^{\infty}$ map $\mathcal{N}_{\delta} \to C^{1,\alpha}(\p \Omega)$.
\end{proof}

\vskip0.5cm

\noindent\textbf{Acknowledgment.} 
T.L. was supported by the Academy of Finland (Centre of Excellence in Inverse Modeling and Imaging, grant numbers 284715 and 309963). Y.-H. Lin is partially  supported by the Ministry of Science and Technology Taiwan, under the Columbus Program: MOST-111-2628-M-A49-002.

\bibliographystyle{alpha}
\bibliography{ref}

\end{document}